\newtheorem{theorem}{Theorem}[section]
\newtheorem{proposition}[theorem]{Proposition}
\newtheorem{lemma}[theorem]{Lemma}
\newtheorem{remark}[theorem]{Remark}
\numberwithin{equation}{section}
\begin{document}

\baselineskip=15pt

\title[Nef cone of flag bundles over a curve]{Nef cone
of flag bundles over a curve}

\author[I. Biswas]{Indranil Biswas}

\address{School of Mathematics, Tata Institute of Fundamental
Research, Homi Bhabha Road, Bombay 400005, India}

\email{indranil@math.tifr.res.in}

\author[A.J. Parameswaran]{A. J. Parameswaran}

\address{School of Mathematics, Tata Institute of Fundamental
Research, Homi Bhabha Road, Bombay 400005, India}

\email{param@math.tifr.res.in}

\subjclass[2000]{14H60, 14F05}

\keywords{Flag bundle, nef cone, ampleness, semistability}

\date{}

\begin{abstract}
Let $X$ be a smooth projective curve defined over an algebraically
closed field $k$, and let $E$ be a vector bundle on $X$. Let
${\mathcal O}_{\text{Gr}_r(E)}(1)$ be the tautological line bundle over 
the Grassmann bundle $\text{Gr}_r(E)$ parametrizing all the $r$ dimensional 
quotients of
the fibers of $E$. We give necessary and sufficient conditions for
${\mathcal O}_{\text{Gr}_r(E)}(1)$ to be ample and nef respectively.
As an application, we compute the nef cone of $\text{Gr}_r(E)$.
This yields a description of the nef cone of any flag bundle
over $X$ associated to $E$.
\end{abstract}

\maketitle

\section{Introduction}

Let $E$ be a semistable vector bundle over a smooth projective curve defined over
an algebraically closed field of characteristic zero. Miyaoka computed the nef
cone of ${\mathbb P}(E)$ \cite[p. 456, Theorem 3.1]{Mi}. Our aim here is to
compute the nef cone of the flag bundles associated to vector bundles
over curves.

Let $X$ be an irreducible smooth projective curve defined
over an algebraically closed field $k$ (the characteristic is not necessarily
zero).
If the characteristic of $k$ is positive, the
absolute Frobenius morphism of $X$ will be denoted by $F_X$.
A vector bundle $E$ on $X$ is called strongly semistable if all
the pullbacks of $E$ by the iterations of $F_X$ are semistable.

Let $E$ be a vector bundle on $X$. Let
\begin{equation}\label{ei1}
E_1\, \subset\, E_2 \, \subset\, \cdots \, \subset\,
E_{m-1} \, \subset\, E_m \,=\, E
\end{equation}
be the Harder--Narasimhan filtration of $E$. If the
characteristic of $k$ is zero, and
$$
f\, :\, Y\, \longrightarrow\, X
$$
is a nonconstant morphism, where $Y$ is an irreducible
smooth projective curve, then the pulled back filtration
$$
f^*E_1\, \subset\, f^*E_2 \, \subset\, \cdots \, \subset\,
f^*E_{m-1} \, \subset\, f^*E_m \,=\, f^*E
$$
coincides with the Harder--Narasimhan filtration of $f^*E$.
If the characteristic of $k$ is positive, then this is not
true in general. However, there is an integer $n_E$, that depends
on $E$, such that the Harder--Narasimhan filtration of
$(F^n_X)^*E$ has this property if $n\, \geq\, n_E$, meaning the Harder--Narasimhan 
filtration of $f^*(F^{n}_X)^*E$ is the pullback, by $f$, of
the Harder--Narasimhan filtration of $(F^{n}_X)^*E$, where $f$
is any nonconstant morphism to $X$ from an
irreducible smooth projective curve.

Fix an integer $r\,\in\, [1\, ,\text{rank}(E)-1]$. Let
${\rm Gr}_r(E)$ be the Grassmann bundle on $X$ parametrizing all the
$r$ dimensional quotients of the fibers of $E$. The tautological
line bundle on ${\rm Gr}_r(E)$ will be denoted by
${\mathcal O}_{\text{Gr}_r(E)}(1)$.

If the characteristic of $k$ is positive,
consider the Harder--Narasimhan filtration of $(F^{n_E}_X)^*E$
$$
0\, =\, V_0\, \subset\, V_1\, \subset\, V_2 \, \subset\, \cdots \, 
\subset\, V_{d-1} \, \subset\, V_d \,=\, (F^{n_E}_X)^*E\, ,
$$
where $n_E$ is as above; if the characteristic of $k$ is
zero, then simply take the Harder--Narasimhan filtration of $E$.
So $V_i$ is $E_i$ in \eqref{ei1} if the characteristic of $k$ is
zero. Using only the numerical data associated to this filtration, we
can compute a rational number
$\theta_{E,r}$ (see \eqref{e4}). The following theorem shows that
$\theta_{E,r}$ controls the positivity of the tautological line bundle
${\mathcal O}_{{\rm Gr}_r(E)}(1)$ on ${\rm Gr}_r(E)$.

\begin{theorem}\label{thm0}
If $\theta_{E,r}\, >\, 0$, then the tautological line bundle
${\mathcal O}_{{\rm Gr}_r(E)}(1)$ is ample.

If $\theta_{E,r}\, =\, 0$, then 
${\mathcal O}_{{\rm Gr}_r(E)}(1)$ is nef but not ample.

If $\theta_{E,r}\, <\, 0$, then ${\mathcal O}_{{\rm Gr}_r(E)}(1)$
is not nef.
\end{theorem}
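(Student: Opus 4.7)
The plan is to translate positivity of $\mathcal{O}_{{\rm Gr}_r(E)}(1)$ into a uniform lower bound on the degrees of rank-$r$ quotients of pullbacks of $E$ to finite covers of $X$, and then identify that bound with $\theta_{E,r}$ as read off from the Harder--Narasimhan filtration. First I would pass from $E$ to $V := (F^{n_E}_X)^*E$, a step which is vacuous in characteristic zero. Since $F^{n_E}_X$ is finite, flat and surjective, and pulls back $\mathcal{O}_{{\rm Gr}_r(E)}(1)$ to $\mathcal{O}_{{\rm Gr}_r(V)}(1)$, the ample and nef properties transfer between the two Grassmann bundles. The payoff is the hypothesis defining $n_E$: for every nonconstant morphism $f\colon Y\to X$ from an irreducible smooth projective curve, the Harder--Narasimhan filtration of $f^*V$ is exactly the pullback of that of $V$.

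Next I would invoke the standard dictionary: a morphism $\phi\colon C\to {\rm Gr}_r(V)$ from an irreducible smooth projective curve that does not factor through a fiber is equivalent to the data of a nonconstant morphism $f_\phi\colon C\to X$ together with a rank-$r$ quotient $f_\phi^*V\twoheadrightarrow Q_\phi$, and
\[ \deg(\phi^*\mathcal{O}_{{\rm Gr}_r(V)}(1))\,=\,\deg Q_\phi. \]
Curves contracted to a point of $X$ lie inside a fiber, where the tautological line bundle restricts to the ample generator of the Picard group of the usual Grassmannian; thus positivity of $\mathcal{O}(1)$ is governed entirely by the degrees of such quotients. The heart of the proof is the inequality
\[ \deg Q_\phi\,\geq\,\deg(f_\phi)\cdot\theta_{E,r}, \]
together with the fact that equality is already approached by a quotient defined over $X$ itself. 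Since each subquotient $f^*(V_i/V_{i-1})$ is semistable of slope $\deg(f)\cdot\mu(V_i/V_{i-1})$, and no quotient of a semistable bundle can have slope strictly below that of the bundle, the extremal $Q$ is constructed greedily: quotient out as many top Harder--Narasimhan pieces of $f^*V$ as possible, and then pass to a suitable quotient of the first remaining semistable piece to reach the required rank. A direct count identifies the resulting extremal degree with $\deg(f_\phi)\cdot\theta_{E,r}$ in the notation of \eqref{e4}.

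With the dictionary and the identification of the extremal degree in place, the three cases follow. If $\theta_{E,r}<0$, the extremal quotient over $X$ gives a section of ${\rm Gr}_r(V)\to X$ on which $\mathcal{O}(1)$ has negative degree, so $\mathcal{O}(1)$ is not nef; if $\theta_{E,r}=0$, the same section shows it is nef but not ample; and if $\theta_{E,r}>0$, every curve pairs strictly positively with $\mathcal{O}(1)$. The main obstacle I anticipate is the last step, namely upgrading ``strictly positive on every curve'' to genuine ampleness, since this implication is not automatic on a general variety. I would handle it by combining the uniform bound $\deg Q\geq \deg(f)\cdot\theta_{E,r}$ on horizontal curves with the ampleness of the tautological bundle on each fiber Grassmannian, and applying the Nakai--Moishezon or Kleiman criterion, using that the numerical cone of curves on ${\rm Gr}_r(V)$ is generated by fiber classes and horizontal classes built from rank-$r$ quotients.
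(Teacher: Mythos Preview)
Your inequality $\deg Q_\phi \ge \deg(f_\phi)\cdot\theta_{E,r}$ is correct and the overall strategy is sound, but the claim that ``equality is already approached by a quotient defined over $X$ itself'' is false, and your arguments for $\theta_{E,r}<0$ and for ``not ample'' when $\theta_{E,r}=0$ both rest on the resulting nonexistent section. Counterexample: take $E$ stable of rank $2$ and degree $-1$ over a curve in characteristic zero, and $r=1$. Then $\theta_{E,1}=\mu(E)=-1/2<0$, yet by stability every line subbundle of $E$ has degree $\le -1$, so every rank-$1$ quotient of $E$ over $X$ has degree $\ge 0$; there is no section of ${\mathbb P}(E)\to X$ on which $\mathcal{O}(1)$ is negative, and no section on which it is zero either.

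What is actually true is that $\theta_{E,r}$ is approximated \emph{after passing to finite covers}: for any $\epsilon>0$ there exist a nonconstant $f\colon Y\to X$ and a subbundle $W\subset f^*(V_t/V_{t-1})$ of rank $s$ with $\mu(V_t/V_{t-1})-\mu(W)/\deg f<\epsilon$. This is the content of \cite{PS}, and it is precisely how the paper produces a curve with $\mathcal{O}(1)$ negative when $\theta_{E,r}<0$. With this correction your route can be completed for $\theta_{E,r}\le 0$ (for the ``not ample'' clause, the approximating curves force a ray on which $\mathcal{O}(1)$ vanishes into $\overline{NE}({\rm Gr}_r(E))$, and one finishes by Kleiman). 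The paper, however, argues the remaining cases rather differently from your outline: for ``not ample'' when $\theta_{E,r}=0$ it embeds ${\rm Gr}_s(V_t/V_{t-1})$ into ${\rm Gr}_r((F^\delta_X)^*E)$ and bounds the growth of $H^0$ of powers of the restricted tautological bundle via the relative anticanonical line bundle and strong semistability of $V_t/V_{t-1}$; and for $\theta_{E,r}>0$ it bypasses Kleiman entirely by showing $\bigwedge^r E$ is ample, filtering $(F^\delta_X)^*\bigwedge^r E$ by strongly semistable bundles $V_{\underline a}$ of positive degree, each of which is ample.
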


(See Theorem \ref{thm1} for a proof of the above theorem.)

As an application of Theorem \ref{thm0}, we compute the
nef cone of ${\rm Gr}_r(E)$ (this is done in Section \ref{s-n-c}).

In order to know the nef cone of a flag bundle over $X$ 
associated to $E$, it is enough to know the nef cones of the
corresponding Grassmann bundles associated to $E$. Therefore, 
using our description of the nef cone of the Grassmann bundles
we obtain a description of the nef cone of any flag bundle over 
$X$ associated to $E$; see Theorem \ref{th-fl}.

Let $K^{-1}_\varphi\, :=\, K^{-1}_{{\rm Gr}_r(E)}\bigotimes 
\varphi^*K_X$
be the relative anti-canonical line bundle for the natural projection
$\varphi\, :\, {\rm Gr}_r(E)\, \longrightarrow\, X$. It is known 
that $K^{-1}_\varphi$ is never ample. If the characteristic of 
$k$ is zero, then $K^{-1}_\varphi$ is nef if and only if $E$ is
semistable \cite{BB}; if the characteristic of $k$ is positive,
then $K^{-1}_\varphi$ is nef if and only if $E$ is strongly 
semistable \cite{BH}. These criteria for semistability and strong semistability
follow from the description of the nef cone of ${\rm Gr}_r(E)$
given in Proposition \ref{prop-e1} and Proposition \ref{prop-e2}.

\section*{Acknowledgements}
We are very grateful to the referee for going though the paper very
carefully and providing detailed comments to improve the exposition.

\section{Preliminaries}

Let $k$ be an algebraically closed field. Let $X$ be an 
irreducible smooth projective curve defined over $k$. If the
characteristic of $k$ is positive, then we have the absolute
Frobenius morphism
$$
F_X\, :\, X \longrightarrow\, X\, .
$$
For convenience, if the characteristic of $k$ is zero, by
$F_X$ we will denote the identity morphism of $X$. For any
integer $m\, \geq\, 1$, let
$$
F^m_X \, :=\, \overbrace{F_X\circ\cdots\circ
F_X}^{m\mbox{-}\rm{times}}\, :\, X\, \longrightarrow\, X
$$
be the $m$--fold iteration of $F_X$. For
notational convenience, by $F^0_X$ we will denote the
identity morphism of $X$.

For a vector bundle $E$ over $X$ of positive rank, define the
number
$$
\mu(E)\, :=\, \frac{\text{degree}(E)}{\text{rank}(E)}
\,\in\, \mathbb Q\, .
$$

A vector bundle $E$ over $X$ is called \textit{semistable}
if for every nonzero subbundle $V\, \subset\, E$, the
inequality
$$
\mu(V)\, \leq\, \mu(E)
$$
holds. The vector bundle $E$ is called \textit{strongly
semistable} if the pullback $(F^m_X)^*E$ is semistable for
all $m\, \geq\, 0$.

For every vector bundle $E$ on $X$, there is a unique filtration 
of subbundles
$$
0\, =\, E_0\, \subset\, E_1\, \subset\, \cdots \, \subset\,
E_{d_E-1} \, \subset\, E_{d_E} \,=\, E
$$
such that $E_i/E_{i-1}$ is semistable for each
$i\, \in\, [1\, ,d_E]$, and $\mu(E_i/E_{i-1})\, > \, 
\mu(E_{i+1}/E_{i})$ for all $i\, \in\, [1\, ,d_E-1]$.
It is known as the \textit{Harder--Narasimhan filtration} of $E$.
If $E$ is semistable, then $d_E\,=\, 1$.

Given any $E$, there is a nonnegative integer $\delta$ 
satisfying the condition that for all $i\, \geq\, 1$,
\begin{equation}\label{e1}
0\, =\, (F^i_X)^* V_0\, \subset\, (F^i_X)^* V_1\, \subset\, 
\cdots \, \subset\,
(F^i_X)^* V_{d-1} \, \subset\, (F^i_X)^* V_{d} \,=\, 
(F^{i+\delta}_X)^*E
\end{equation}
is the Harder--Narasimhan filtration of $(F^{i+\delta}_X)^*E$,
where
\begin{equation}\label{e-1}
0\, =\, V_0\, \subset\, V_1\, \subset\, \cdots \, \subset\,
V_{d-1} \, \subset\, V_{d} \,=\, (F^\delta_X)^*E
\end{equation}
is the Harder--Narasimhan filtration of $(F^\delta_X)^*E$
\cite[p. 259, Theorem 2.7]{La} (this is vacuously true if
the characteristic of $k$ is zero). It should be 
emphasized that $\delta$ in \eqref{e1} depends on $E$.

Note that the quotient $V_i/V_{i-1}$ in the filtration in
\eqref{e-1} is strongly semistable
for all $i\, \in\, [1\, ,d]$. If $\delta$ satisfies the above
condition, then clearly $\delta+j$ also satisfies the above
condition for all $j\, \geq\, 0$.

For a vector bundle $E$ on $X$, let ${\mathbb P}(E)$ denote the
projective bundle over $X$ parametrizing all the hyperplanes
in the fibers of $E$. The vector bundle $E$ is called 
\textit{ample} if the tautological line bundle
${\mathcal O}_{{\mathbb P}(E)}(1)$ on ${\mathbb P}(E)$ is ample
(see \cite{Ha} for properties of ample bundles).

A line bundle $L$ over an irreducible projective variety $Z$
defined over $k$ is
called \textit{numerically effective} (``nef'' for short)
if for all pairs of the form $(C\, ,f)$, where $C$ is a smooth
projective curve, and $f$ is a morphism from $C$ to $Z$, the inequality
$$
\text{degree}(f^*L) \, \geq\, 0
$$
holds. A vector bundle $E$ is called \textit{nef} if the
tautological line bundle ${\mathcal O}_{{\mathbb P}(E)}(1)$
over ${\mathbb P}(E)$ is nef.

The following lemma is well-known.

\begin{lemma}\label{lem1}
Let $0\, \longrightarrow\, W \, \longrightarrow\, E
\, \longrightarrow\, Q\, \longrightarrow\, 0$ be a short exact
sequence of vector bundles. If both $W$ and $Q$ are ample (respectively,
nef), then $E$ is ample (respectively, nef).
\end{lemma}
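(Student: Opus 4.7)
The plan is to treat the nef case by a direct pullback-to-a-curve argument, and the ample case via the standard filtration of symmetric powers combined with Hartshorne's cohomological criterion. Both are driven by the given short exact sequence $0 \to W \to E \to Q \to 0$.

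For the nef case, it suffices to verify that for every smooth projective curve $C$ and every morphism $f \,:\, C \,\longrightarrow\, \mathbb{P}(E)$, the pulled back line bundle $L \,:=\, f^* \mathcal{O}_{\mathbb{P}(E)}(1)$ has non-negative degree. Such an $f$ corresponds to a line bundle quotient $f^*E \,\twoheadrightarrow\, L$. Pull back the given exact sequence to $C$ and consider the composition $\psi \,:\, f^*W \,\longrightarrow\, f^*E \,\twoheadrightarrow\, L$. If $\psi \,=\, 0$, then $L$ is a line bundle quotient of $f^*Q$, and the nefness of $Q$, inherited by $f^*Q$, gives $\text{degree}(L) \,\geq\, 0$. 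If $\psi \,\neq\, 0$, the image of $\psi$ is a nonzero subsheaf of the line bundle $L$ on the smooth curve $C$, hence is of the form $L(-D)$ for some effective divisor $D \,\geq\, 0$; this $L(-D)$ is a line bundle quotient of $f^*W$, so nefness of $W$ yields $\text{degree}(L(-D)) \,\geq\, 0$, whence $\text{degree}(L) \,\geq\, \text{degree}(D) \,\geq\, 0$.

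For the ample case, I would use the standard decreasing filtration of $\mathrm{Sym}^n E$ whose successive quotients are $\mathrm{Sym}^{n-i} W \otimes \mathrm{Sym}^i Q$ for $0 \,\leq\, i\, \leq\, n$, and combine it with Hartshorne's characterization that $E$ is ample if and only if for every coherent sheaf $\mathcal{F}$ on $X$ one has $H^j(X, \,\mathcal{F} \otimes \mathrm{Sym}^n E) \,=\, 0$ for all $j \,\geq\, 1$ and all sufficiently large $n$. By the long exact sequences associated to the filtration, this reduces to the corresponding vanishing of $H^j(X, \,\mathcal{F} \otimes \mathrm{Sym}^{n-i} W \otimes \mathrm{Sym}^i Q)$ for every $i \,\in\, [0,\, n]$. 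For each such $i$ with $n - i$ large one applies ampleness of $W$ to the coherent sheaf $\mathcal{F} \otimes \mathrm{Sym}^i Q$, while for $i$ large one applies ampleness of $Q$ to $\mathcal{F} \otimes \mathrm{Sym}^{n-i} W$.

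The main obstacle, such as it is in this standard lemma, lies in arranging the cohomological vanishings in the ample case \emph{uniformly} in $i$: Hartshorne's threshold depends on the coherent sheaf fed into the criterion, and here that sheaf varies with $i$. The technical care consists in choosing a common threshold valid for all $i \,\in\, [0,\, n]$, which can be achieved either by a boundedness argument applied to the family $\{\mathrm{Sym}^i Q\}$ or by an induction on the length of the filtration reducing to the extension of an ample bundle by a line bundle. The nef half, by contrast, is essentially a case split and some bookkeeping about the image of $\psi$ as a subsheaf of a line bundle on a smooth curve.
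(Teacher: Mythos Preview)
The paper does not prove this lemma: it simply cites \cite[p.~71, Corollary~3.4]{Ha} for ampleness and \cite[p.~308, Proposition~1.15(ii)]{DPS} for nefness. So there is no argument in the paper to compare against, and your proposal already supplies more than the paper does.

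Your nef argument is correct and standard (modulo the harmless abuse of writing $f^*E$ for the pullback along $\varphi\circ f$, where $\varphi:\mathbb{P}(E)\to X$ is the projection). For the ample case your skeleton is the right one---filter $\mathrm{Sym}^n E$ with graded pieces $\mathrm{Sym}^{n-i}W\otimes\mathrm{Sym}^i Q$ and invoke the cohomological criterion---and you correctly isolate the one genuine issue, namely uniformity in $i$ of the vanishing threshold. However, neither of your suggested fixes works as written. The family $\{\mathrm{Sym}^i Q\}_{i\ge 0}$ is not bounded (the ranks are unbounded), so a boundedness argument in the usual sense does not apply. And it is unclear what ``induction on the length of the filtration reducing to the extension of an ample bundle by a line bundle'' means: the filtration of $\mathrm{Sym}^n E$ has length $n+1$, which grows with $n$, and nothing in the hypotheses singles out a line bundle.

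The actual resolution (essentially Hartshorne's) runs as follows. By ampleness of $Q$ there is an $n_0$ such that $\mathcal{F}\otimes\mathrm{Sym}^i Q$ is globally generated for every $i\ge n_0$; the resulting surjection from a trivial bundle, tensored with $\mathrm{Sym}^{n-i}W$, reduces the vanishing of $H^q(\mathcal{F}\otimes\mathrm{Sym}^i Q\otimes\mathrm{Sym}^{n-i}W)$ to that of $H^q(\mathrm{Sym}^{n-i}W)$ together with an $H^{q+1}$ term coming from the kernel. Over a curve---which is all the present paper needs---this $H^{q+1}$ term is automatically zero for $q\ge 1$, and $H^1(\mathrm{Sym}^m W)=0$ once $m$ exceeds a threshold $m_0$ depending only on $W$. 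The finitely many remaining indices with $i<n_0$ or $n-i<m_0$ are then handled by applying ampleness of $W$ (respectively $Q$) to a \emph{finite} list of fixed coherent sheaves, which does give a common threshold. In higher dimension one controls the kernel term by descending induction on $q$.
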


See \cite[p. 71, Corollary 3.4]{Ha} for the case of ample bundles, and
\cite[p. 308, Proposition 1.15(ii)]{DPS} for the case of nef vector bundles.

\section{(Semi)positivity criterion}\label{sec3}

Let $E$ be a vector bundle over $X$ of rank at least two.
Fix an integer $r\, \in\, [1\, ,\text{rank}(E)-1]$. Let
\begin{equation}\label{vp}
\varphi\, :\, \text{Gr}_r(E) \, \longrightarrow\, X
\end{equation}
be the Grassmann bundle over $X$ parametrizing all the quotients,
of dimension $r$, of the fibers of $E$. Let
\begin{equation}\label{f1}
{\mathcal O}_{\text{Gr}_r(E)}(1)\, \longrightarrow\, 
\text{Gr}_r(E)
\end{equation}
be the tautological line bundle; the fiber of ${\mathcal 
O}_{\text{Gr}_r(E)}(1)$ over any quotient $Q$ of $E_x$ is
$\bigwedge^r Q$. So the line bundle ${\mathcal O}_{\text{Gr}_r(E)}(1)$ is
relatively ample.

Take any $\delta$ satisfying the condition in \eqref{e1}. Let
\begin{equation}\label{e2}
0\, =\, V_0\, \subset\, V_1\, \subset\, \cdots \, \subset\,
V_{d-1} \, \subset\, V_{d} \,=\, (F^\delta_X)^*E
\end{equation}
be the Harder--Narasimhan filtration of $(F^\delta_X)^*E$.
We recall that $V_i/V_{i-1}$ is strongly semistable
for all $i\, \in\, [1\, ,d]$. Let
$$
t\, \in\, [1\, , d]
$$
be the unique largest integer such that
\begin{equation}\label{e3}
\sum_{i=t}^d \text{rank}(V_i/V_{i-1})\, \geq \, r\, ;
\end{equation}
so either $t\,=\, d$, or $t$ is the smallest
integer with
$$
\sum_{i=t+1}^d \text{rank}(V_i/V_{i-1})\,=\,
\text{rank}(((F^\delta_X)^*E)/V_t)\, < \, r\, .
$$

Define
\begin{equation}\label{e4}
\theta_{E,r}\, :=\, (r- \text{rank}(((F^\delta_X)^*E)/V_t))
\cdot \mu(V_t/V_{t-1}) + \text{degree}(((F^\delta_X)^*E)/V_t)\, ,
\end{equation}
where $t$ is defined above using \eqref{e3}. If $E$ is strongly semistable,
then we may take $\delta\,=\, 1$; in that case, $\theta_{E,r}\,=\,
r\cdot \mu(E)$. Note that the condition
that $\theta_{E,r}$ is nonzero, or the condition that $\theta_{E,r}$
is positive, does not depend on the choice of
the integer $\delta$ in \eqref{e2}.

\begin{lemma}\label{thm1-l1}
Assume that $\theta_{E,r}\, >\, 0$. Then the line bundle
${\mathcal O}_{{\rm Gr}_r(E)}(1)\, \longrightarrow\,
{\rm Gr}_r(E)$ in \eqref{f1} is ample.
\end{lemma}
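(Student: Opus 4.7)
The plan is to deduce the ampleness of $\mathcal{O}_{\mathrm{Gr}_r(E)}(1)$ from the ampleness of the vector bundle $\bigwedge^r E$ on $X$, via the relative Pl\"ucker embedding $\mathrm{Gr}_r(E) \hookrightarrow \mathbb{P}(\bigwedge^r E)$ over $X$. This embedding pulls $\mathcal{O}_{\mathbb{P}(\bigwedge^r E)}(1)$ back to $\mathcal{O}_{\mathrm{Gr}_r(E)}(1)$, so once $\bigwedge^r E$ is known to be ample as a vector bundle, the restriction of its tautological ample line bundle to the closed Pl\"ucker image provides the required ample line bundle.

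To prove $\bigwedge^r E$ is ample, it suffices to show that $(F^\delta_X)^*(\bigwedge^r E) = \bigwedge^r ((F^\delta_X)^*E)$ is ample, since the iterated Frobenius $F^\delta_X$ is finite surjective and ampleness of a vector bundle descends along finite surjective morphisms of the base (in characteristic zero this reduction is vacuous as $F_X$ is the identity). The filtration \eqref{e2} of $(F^\delta_X)^*E$ induces a filtration on $\bigwedge^r ((F^\delta_X)^*E)$ whose associated graded is the direct sum, over admissible tuples $(a_1,\ldots,a_d)$ of nonnegative integers with $\sum_i a_i = r$ and $a_i \le \mathrm{rank}(V_i/V_{i-1})$, of the bundles $\bigotimes_{i=1}^d \bigwedge^{a_i}(V_i/V_{i-1})$, each of slope $\sum_i a_i \mu(V_i/V_{i-1})$. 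Because every $V_i/V_{i-1}$ is strongly semistable, each such graded piece is also strongly semistable, by the theorem of Ramanan--Ramanathan that tensor products of strongly semistable bundles on a smooth projective curve are strongly semistable.

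A short combinatorial check, using the strict inequalities $\mu(V_1/V_0) > \cdots > \mu(V_d/V_{d-1})$ of the Harder--Narasimhan slopes, shows that the minimum of $\sum_i a_i \mu(V_i/V_{i-1})$ over admissible tuples is attained by filling $a_d, a_{d-1}, \ldots$ to capacity and stopping at $a_t = r - \mathrm{rank}(((F^\delta_X)^*E)/V_t)$; this minimum is exactly $\theta_{E,r}$ as defined in \eqref{e4}. Under the hypothesis $\theta_{E,r} > 0$ every graded piece of the filtration is then a strongly semistable bundle of strictly positive slope on the curve $X$, and hence ample. Iterating Lemma~\ref{lem1} shows $\bigwedge^r ((F^\delta_X)^*E)$ is ample; Frobenius descent transfers this to $\bigwedge^r E$; and the Pl\"ucker embedding forces $\mathcal{O}_{\mathrm{Gr}_r(E)}(1)$ to be ample. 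The technical heart of the argument is the Ramanan--Ramanathan preservation of strong semistability under tensor and exterior operations in positive characteristic; everything else (the combinatorial identification of $\theta_{E,r}$ with the minimum slope, and the fact that a strongly semistable bundle of positive slope on a curve is ample) is standard once strong semistability is in hand.
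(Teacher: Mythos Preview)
Your proposal is correct and follows essentially the same route as the paper: Pl\"ucker embedding, reduction via the finite morphism $F^\delta_X$, the induced filtration of $\bigwedge^r((F^\delta_X)^*E)$ with strongly semistable graded pieces $\bigotimes_i \bigwedge^{a_i}(V_i/V_{i-1})$ via Ramanan--Ramanathan, the identification of $\theta_{E,r}$ as the minimal slope, and Lemma~\ref{lem1}. The only difference is one of detail: the paper spells out, via a Serre duality and cohomology-vanishing argument, why a strongly semistable bundle of positive degree on a curve is ample, whereas you invoke this as standard.
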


\begin{proof}
Consider the Pl\"ucker embedding
\begin{equation}\label{rho}
\rho\, :\, \text{Gr}_r(E)\, \longrightarrow\, {\mathbb P}
(\bigwedge\nolimits^r E)\, .
\end{equation}
We have
\begin{equation}\label{rho1}
\rho^*{\mathcal O}_{{\mathbb P} (\bigwedge^r E)}(1)\,=\,
{\mathcal O}_{\text{Gr}_r(E)}(1)\, .
\end{equation}
Therefore, to prove that
${\mathcal O}_{\text{Gr}_r(E)}(1)$ is ample, it suffices to
show that the vector bundle $\bigwedge^r E$ is ample. Since
$F^\delta_X$ is a finite flat surjective morphism, it follows 
that $\bigwedge^r E$ is ample if and only if
$(F^\delta_X)^*\bigwedge^r E$ is ample \cite[p. 73,
Proposition 4.3]{Ha}.

Using the filtration in \eqref{e2} it follows that the
vector bundle $(F^\delta_X)^*\bigwedge^r E$ admits a
filtration of subbundles such that each successive quotient is of the
form
\begin{equation}\label{ua}
V_{\underline a}\, :=\, \bigotimes_{i=1}^d 
\bigwedge\nolimits^{a_i}(V_i/V_{i-1})
\end{equation}
with $\sum_{i=1}^d a_i\,=\, r$; we use the standard convention 
that $\bigwedge\nolimits^{0} F$ is the trivial line bundle for
every vector bundle $F$. Since each $V_i/V_{i-1}$ is strongly
semistable, the above vector bundle $V_{\underline a}$ is
also strongly semistable (see
\cite[p. 285, Theorem 3.18]{RR} for ${\rm Char}(k)\,=\, 0$, and
\cite[p. 288, Theorem 3.23]{RR} for ${\rm Char}(k)\,>\, 0$). From
the assumption that $\theta_{E,r}\, >\, 0$ it follows immediately that
\begin{equation}\label{dva}
\text{degree}(V_{\underline a}) \, >\, 0\, .
\end{equation}

Since $V_{\underline a}$ is strongly semistable of positive
degree, it can be shown that $V_{\underline a}$ is ample \cite{BP}.
We include the details for completeness.

To prove that $V_{\underline a}$ is ample,
we need to show that for any coherent sheaf $\mathcal E$ on $X$, there
is a positive integer $b_{\mathcal E}$ such that
\begin{equation}\label{ch-am}
H^1(X,\, \text{Sym}^j(V_{\underline a})\otimes {\mathcal E})\,=\, 0
\end{equation}
for all $j\, \geq\, b_{\mathcal E}$ \cite[p. 70, Proposition 3.3]{Ha}.
Since $H^1(X,\, \text{Sym}^j(V_{\underline a})\bigotimes {\mathcal E})\,=\, 0$
if ${\mathcal E}$ is a torsion sheaf, and any vector bundle on $X$ admits
a filtration of subbundles
such that each successive quotient is a line bundle, it is enough
to prove \eqref{ch-am} for all line bundles ${\mathcal E}$. Take a
line bundle ${\mathcal E}$. Since $V_{\underline a}$ is strongly semistable,
it follows that $\text{Sym}^j(V_{\underline a})$ is semistable for all $j
\,\geq\, 1$ (see 
\cite[p. 285, Theorem 3.18]{RR} for ${\rm Char}(k)\,=\, 0$, and
\cite[p. 288, Theorem 3.23]{RR} for ${\rm Char}(k)\,>\, 0$). Therefore,
the vector bundle $\text{Sym}^j(V_{\underline a})^*\bigotimes {\mathcal E}^*
\bigotimes K_X$ is semistable. Now, from \eqref{dva} we conclude that
$$
\mu(\text{Sym}^j(V_{\underline a})^*\otimes {\mathcal E}^*
\otimes K_X)\,=\, -j\cdot \mu(V_{\underline a}) - \text{degree}({\mathcal E})
+2(\text{genus}(X)-1) \, <\, 0
$$
for all $j$ sufficiently large positive. Consequently,
$$
H^0(X,\, \text{Sym}^j(V_{\underline a})^*\otimes {\mathcal E}^*
\otimes K_X)\,=\, 0
$$
for all $j$ sufficiently large positive. Therefore, from Serre duality,
$$
H^1(X,\, \text{Sym}^j(V_{\underline a})\otimes {\mathcal E})\,=\, 0
$$
for all $j$ sufficiently large positive. Hence $V_{\underline a}$ is ample.

We note that if
the characteristic of $k$ is zero, then the nef cone of the projective bundle
${\mathbb P}(V_{\underline a})$ is explicitly described in \cite[p. 456,
Theorem 3.1(4)]{Mi}. It is straightforward to check that the tautological line
bundle ${\mathcal O}_{{\mathbb P}(V_{\underline a})}(1)$ lies in the interior
of the nef cone of ${\mathbb P}(V_{\underline a})$. This also proves that
$V_{\underline a}$ is ample under the assumption that the characteristic of $k$ is zero.

Since $V_{\underline a}$ is ample, and
$(F^\delta_X)^*\bigwedge^r E$ admits a filtration of subbundles such that
each successive quotient is of the form $V_{\underline a}$,
using Lemma \ref{lem1} we conclude that the vector bundle
$(F^\delta_X)^*\bigwedge^r E$ is ample. We noted earlier that
${\mathcal O}_{\text{Gr}_r(E)}(1)$ is ample if
$(F^\delta_X)^*\bigwedge^r E$ is ample.
\end{proof}

\begin{lemma}\label{thm1-l2}
Assume that $\theta_{E,r}$ defined in \eqref{e4} satisfies the inequality
$\theta_{E,r}\, <\, 0$. Then ${\mathcal O}_{{\rm
Gr}_r(E)}(1)$ is not nef.
\end{lemma}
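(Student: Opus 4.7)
The plan is to exhibit a sub-Grassmann bundle of $\mathrm{Gr}_r(E)$ on which the restriction of $\mathcal{O}_{\mathrm{Gr}_r(E)}(1)$ is patently not nef, after first normalizing the setup. Since $F^\delta_X$ is finite flat and surjective, the induced morphism $\mathrm{Gr}_r((F^\delta_X)^*E) \to \mathrm{Gr}_r(E)$ is also finite flat and surjective and pulls the tautological line bundle back to the tautological line bundle. Nefness of a line bundle on $\mathrm{Gr}_r(E)$ is equivalent to nefness of its pullback under such a finite surjective morphism, and the invariant $\theta$ depends only on the filtration \eqref{e2} and so is the same for $E$ and $(F^\delta_X)^*E$. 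We may therefore replace $E$ by $(F^\delta_X)^*E$ and assume $\delta=0$, so that the Harder--Narasimhan graded pieces $S_i := V_i/V_{i-1}$ of $E$ itself are strongly semistable.

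With this normalization, write $S := V_t/V_{t-1}$, $U := E/V_t$, $r' := \mathrm{rank}(U)$, and $s_0 := r-r'$, noting $s_0 \in [1,\mathrm{rank}(S)]$ by the definition of $t$, so that $\theta_{E,r} = s_0\mu(S) + \deg U$. The quotient $E \twoheadrightarrow E/V_{t-1}$ combined with the exact sequence $0\to S \to E/V_{t-1}\to U\to 0$ produces a closed embedding
$$
\iota \,:\, \mathrm{Gr}_{s_0}(S)\,\hookrightarrow\, \mathrm{Gr}_r(E),
$$
sending a rank-$s_0$ quotient $S\twoheadrightarrow Q_1$ to the rank-$r$ quotient $Q$ of $E$ fitting into $0\to Q_1 \to Q \to U \to 0$. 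A determinant computation using $\det Q = \det Q_1 \otimes \det U$ gives
$$
\iota^*\mathcal{O}_{\mathrm{Gr}_r(E)}(1)\,=\,\mathcal{O}_{\mathrm{Gr}_{s_0}(S)}(1) \otimes \psi^*(\det U),
$$
where $\psi : \mathrm{Gr}_{s_0}(S) \to X$ is the projection. Because pullbacks of nef line bundles are nef, it suffices to show this line bundle is not nef on $\mathrm{Gr}_{s_0}(S)$.

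Since $S$ is strongly semistable and $\deg(\det U) + s_0\mu(S) = \theta_{E,r} < 0$, this is exactly the strongly semistable instance of the theorem, which is available from Miyaoka \cite{Mi} in characteristic zero and from \cite{BH}, \cite{BP} in positive characteristic: for a strongly semistable bundle $W$ on $X$, the line bundle $\mathcal{O}_{\mathrm{Gr}_k(W)}(1)\otimes\psi^*L$ is nef if and only if $\deg L\geq -k\mu(W)$. Concretely, when $\deg L < -k\mu(W)$, these results produce a finite morphism $g:Y\to X$ and a rank-$k$ quotient $g^*W \twoheadrightarrow Q_1$ whose slope approximates $\deg g \cdot \mu(W)$ closely enough that the corresponding section $Y \to \mathrm{Gr}_k(g^*W)$, followed by $\iota$, gives a curve in $\mathrm{Gr}_r(E)$ on which $\mathcal{O}_{\mathrm{Gr}_r(E)}(1)$ has strictly negative degree --- and the strict inequality $\theta_{E,r} < 0$ leaves room to absorb the approximation error.

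I expect the principal obstacle to be this last step, namely realizing the destabilizing curve inside $\mathrm{Gr}_{s_0}(S)$ as opposed to merely inside the ambient projective bundle $\mathbb{P}(\bigwedge^{s_0}S)$. The Plücker embedding together with Miyaoka's theorem for projective bundles only yields the ampleness direction of the Grassmann-bundle statement; the reverse direction requires either invoking a Grassmann-bundle version of Miyaoka (as developed in the cited references) or constructing the curve by hand out of the polystable structure of $g^*S$ on a suitable cover $g:Y\to X$ with $N\mu(S)$ close to an integer.
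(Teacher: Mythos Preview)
Your reduction is exactly the one the paper carries out, only organized slightly differently. The paper does not first set up the embedding $\iota:\mathrm{Gr}_{s_0}(S)\hookrightarrow\mathrm{Gr}_r(E)$ and then appeal to a known strongly-semistable case; instead it directly builds the bad curve. But the content is the same: one needs, for the strongly semistable piece $S=V_t/V_{t-1}$, a finite cover $f:Y\to X$ and a rank-$s_0$ quotient of $f^*S$ whose slope is within $\epsilon$ of $\deg(f)\cdot\mu(S)$, and then the composite quotient $f^*E\twoheadrightarrow f^*(E/V_{t-1})\twoheadrightarrow Q$ has rank $r$ and, thanks to $\theta_{E,r}<0$ absorbing the $\epsilon$, negative degree. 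Your determinant computation $\iota^*\mathcal{O}_{\mathrm{Gr}_r(E)}(1)=\mathcal{O}_{\mathrm{Gr}_{s_0}(S)}(1)\otimes\psi^*\det U$ is the same calculation the paper records (in the next lemma) as \eqref{f2}.

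Where you explicitly flag a gap, the paper closes it not with \cite{Mi}, \cite{BH}, or \cite{BP} but with \cite{PS}: for a strongly semistable bundle $S$, any $s\in[1,\mathrm{rank}(S)]$, and any $\epsilon>0$, there is a nonconstant $f:Y\to X$ and a subbundle $W\subset f^*S$ of rank $s$ with $\mu(S)-\mu(W)/\deg(f)<\epsilon$. This is precisely the ``rank-$k$ quotient whose slope approximates $\deg g\cdot\mu(W)$'' that you describe, and it works uniformly in all characteristics. Your alternative suggestion of extracting the curve from a polystable decomposition of $g^*S$ is not reliable: a (strongly) semistable bundle need not split as a sum of line bundles after any finite cover, so in general there is no rank-$s_0$ quotient of slope \emph{equal} to $\mu(g^*S)$, only one arbitrarily close, which is exactly what \cite{PS} provides. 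With that citation in place your argument is complete and coincides with the paper's.
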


\begin{proof}
Consider the strongly semistable vector bundle $V_t/V_{t-1}$
(see \eqref{e4}). Given any real number $\epsilon\, >\,0$,
and any $s\, \in\, [1\, ,\text{rank}(V_t/V_{t-1})]$,
there exists an irreducible smooth projective curve $Y$, a
nonconstant morphism
$$
f\, :\, Y\, \longrightarrow\, X\, ,
$$
and a subbundle
\begin{equation}\label{W}
W\, \subset\, f^*(V_t/V_{t-1})
\end{equation}
of rank $s$, such that
\begin{equation}\label{W1}
\mu(V_t/V_{t-1}) - \frac{\mu(W)}{\text{degree}(f)}\,=\,
\frac{\mu(f^*(V_t/V_{t-1})) -\mu(W)}{\text{degree}(f)}\,
<\, \epsilon
\end{equation}
(see \cite[p. 525, Theorem 4.1]{PS}). Set
$$
s\,=\, r- \text{rank}(((F^\delta_X)^*E)/V_t)
~\, ~\text{and~}\,~ \epsilon \,=\, -\frac{\theta_{E,r}}{2s}\, .
$$

Let $Q$ be the quotient of $f^*(F^\delta_X)^*E$ defined
by the composition
$$
f^*(F^\delta_X)^*E\, \longrightarrow\,
f^*(((F^\delta_X)^*E)/V_{t-1}) \, \longrightarrow\,
f^*(((F^\delta_X)^*E)/V_{t-1})/W\, ,
$$
where $f$ and $W$ are as in \eqref{W} for the above
choices of $s$ and $\epsilon$. Note that
$$
\text{degree}(Q)\,=\, \text{degree}(f)\cdot
\text{degree}(((F^\delta_X)^*E)/V_t)+
(\text{degree}(f)\cdot\text{degree}(V_t/V_{t-1})-
\text{degree}(W))\, .
$$
Hence from \eqref{e4},
$$
\text{degree}(Q)\,=\,\text{degree}(f)(\theta_{E,r}+(\mu(V_t/V_{t-1})
-\frac{\mu(W)}{\text{degree}(f)})\cdot s)\, .
$$
But from \eqref{W1}, we have $\mu(V_t/V_{t-1}) 
-\mu(W)/\text{degree}(f)\, <\,\epsilon$. Consequently,
\begin{equation}\label{Q}
\text{degree}(Q)\,< \, 0\, .
\end{equation} 

The quotient bundle $f^*(F^\delta_X)^*E\, \longrightarrow\, Q$ of 
rank $r$ defines a morphism
$$
\phi\, :\, Y\, \longrightarrow\, \text{Gr}_r((F^\delta_X)^*E)
\,=\, (F^\delta_X)^*\text{Gr}_r(E)\, ,
$$
where $\text{Gr}_r((F^\delta_X)^*E)$ is the Grassmann bundle
parametrizing all $r$ dimensional quotients of the fibers of
$(F^\delta_X)^*E$, and $(F^\delta_X)^*\text{Gr}_r(E)$ is the pullback
of the fiber bundle $\text{Gr}_r(E)\,\longrightarrow\, X$ using
the morphism $F^\delta_X$. Consider the commutative diagram
\begin{equation}\label{l1}
\begin{matrix}
(F^\delta_X)^*\text{Gr}_r(E) &\stackrel{\beta}{\longrightarrow}
& \text{Gr}_r(E)\\
\Big\downarrow && \Big\downarrow  \\
X &\stackrel{F^\delta_X}{\longrightarrow}& X
\end{matrix}
\end{equation}
of morphisms.
We have $\beta^*{\mathcal O}_{\text{Gr}_r(E)}(1)\,=\,
{\mathcal O}_{\text{Gr}_r((F^\delta_X)^*E)}(1)$, where
${\mathcal O}_{\text{Gr}_r((F^\delta_X)^*E)}(1)$ is the
tautological line bundle, and $\beta$ is the morphism in
\eqref{l1}. Hence from the definition
of $\phi$ it follows immediately that
$$
(\beta\circ\phi)^*{\mathcal O}_{\text{Gr}_r(E)}(1)\,=\,
\bigwedge\nolimits^r Q\, .
$$
Now from \eqref{Q} we conclude that ${\mathcal
O}_{\text{Gr}_r(E)}(1)$ is not nef.
\end{proof}

\begin{lemma}\label{thm1-l3}
Assume that $\theta_{E,r}\, =\, 0$ (defined in \eqref{e4}). Then 
${\mathcal O}_{{\rm Gr}_r(E)}(1)$ is nef but not ample.
\end{lemma}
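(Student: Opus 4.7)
The plan is to adapt the arguments of Lemmas \ref{thm1-l1} and \ref{thm1-l2} to the boundary case $\theta_{E,r}=0$, obtaining nefness as a limit of the former and non-ampleness as a limit of the latter. For the nef half, I repeat the filtration argument of Lemma \ref{thm1-l1} with the strict inequalities relaxed. Every exponent $\underline a$ with $\sum_i a_i=r$ satisfies $\mu(V_{\underline a})=\sum_i a_i\mu(V_i/V_{i-1})\geq \theta_{E,r}=0$, the minimum of this linear functional being precisely $\theta_{E,r}$. Thus each graded piece $V_{\underline a}$ appearing in the filtration of $(F_X^\delta)^*\bigwedge^r E$ is strongly semistable of non-negative degree, hence nef: for any nonconstant morphism $f:C\to X$ from a smooth projective curve, $f^*V_{\underline a}$ is semistable, so any line bundle quotient has degree at least $\deg(f)\mu(V_{\underline a})\geq 0$. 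Iterated application of Lemma \ref{lem1} shows $(F_X^\delta)^*\bigwedge^r E$ is nef; nefness descends along the finite surjective morphism $F_X^\delta$ via the projection formula to give nefness of $\bigwedge^r E$, and the Pl\"ucker identity \eqref{rho1} yields nefness of $\mathcal{O}_{\text{Gr}_r(E)}(1)$.

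For the non-ample half, I show $H:=\mathcal{O}_{\text{Gr}_r(E)}(1)$ lies on the boundary of the nef cone of $\text{Gr}_r(E)$. The group $N^1(\text{Gr}_r(E))_{\mathbb Q}$ has rank two, generated by $H$ and the fiber class $F:=\varphi^*[\text{pt}]$. Since the nef cone is closed and the ample cone equals its interior, it suffices to show that $H-\eta F$ fails to be nef for every $\eta>0$. Fix $\eta>0$ and choose $\epsilon>0$ with $\epsilon s<\eta\deg(F_X^\delta)$, where $s=r-\text{rank}((F_X^\delta)^*E/V_t)$. Theorem 4.1 of \cite{PS} furnishes $f:Y\to X$ and a subbundle $W\subset f^*(V_t/V_{t-1})$ of rank $s$ with $\mu(V_t/V_{t-1})-\mu(W)/\deg(f)<\epsilon$; the rank-$r$ quotient $Q$ of $f^*(F_X^\delta)^*E$ from Lemma \ref{thm1-l2} then satisfies
\[
\deg(Q)=\deg(f)\bigl[\theta_{E,r}+(\mu(V_t/V_{t-1})-\mu(W)/\deg(f))\cdot s\bigr]<\deg(f)\cdot\epsilon\cdot s.
\]
The induced morphism $\beta\circ\phi:Y\to\text{Gr}_r(E)$ pairs with $H$ to give $\deg(Q)$ and with $F$ to give $\deg(F_X^\delta)\deg(f)$; hence $(H-\eta F)\cdot(\beta\circ\phi)_*[Y]<\deg(f)(\epsilon s-\eta\deg(F_X^\delta))<0$, so $H-\eta F$ is not nef and $H$ cannot lie in the interior of the nef cone.

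The main obstacle is that Theorem 4.1 of \cite{PS} supplies only a strict approximation: $\deg(Q)/\deg(f)$ can be driven to zero but is not in general realized as zero on a single curve. This is why I employ the closed-nef-cone characterization rather than exhibit an explicit degree-zero curve. In the degenerate subcase $s=0$, i.e.\ $\text{rank}((F_X^\delta)^*E/V_{t-1})=r$, the canonical quotient $(F_X^\delta)^*E\twoheadrightarrow (F_X^\delta)^*E/V_{t-1}$ does directly furnish an explicit degree-zero curve.
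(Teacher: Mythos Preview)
Your nef half is essentially identical to the paper's: both use the filtration of $(F_X^\delta)^*\bigwedge^r E$ with strongly semistable graded pieces $V_{\underline a}$ of nonnegative degree, observe that such bundles are nef, and conclude via Lemma~\ref{lem1} and the Pl\"ucker embedding.

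For non-ampleness, however, you take a genuinely different route. The paper constructs an explicit closed embedding $\gamma:\mathrm{Gr}_s(V_t/V_{t-1})\hookrightarrow \mathrm{Gr}_r((F_X^\delta)^*E)$, identifies $\gamma^*\mathcal{O}(r_t)$ with $K_f^{-1}\otimes f^*\mathcal{L}_0$ for a degree-zero line bundle $\mathcal{L}_0$ on $X$, and then uses the $\mathrm{PGL}_{r_t}$-structure of the Grassmann bundle to show that $f_*(K_f^{-1})^{\otimes n}\otimes\mathcal{L}_0^{\otimes n}$ is semistable of degree zero; this bounds $\dim H^0$ of the restriction by a polynomial of degree $d_0-1$, contradicting the degree-$d_0$ growth that ampleness would force. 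Your argument instead reuses the machinery of Lemma~\ref{thm1-l2}: you invoke \cite{PS} to produce curves on which $H$ has arbitrarily small nonnegative degree relative to the degree over $X$, so that $H-\eta F$ fails nefness for every $\eta>0$; hence $H$ lies on the boundary of the nef cone and is not ample by Kleiman's criterion. Your approach is more economical---it sidesteps the somewhat delicate cohomological computation and the $\mathrm{PGL}$ argument---at the cost of invoking the identification of the ample cone with the interior of the nef cone and the rank-two structure of $N^1(\mathrm{Gr}_r(E))$. The paper's approach, by contrast, exhibits an explicit positive-dimensional subvariety witnessing the failure of ampleness, which is of independent interest. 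One minor remark: your closing comment about the ``degenerate subcase $s=0$'' is vacuous, since the maximality of $t$ in \eqref{e3} forces $s\geq 1$ in every case.
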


\begin{proof}
The proof that ${\mathcal O}_{\text{Gr}_r(E)}(1)$ is nef
is very similar to the proof of Lemma \ref{thm1-l1}.

We know that $\bigwedge^r E$ is nef if and only if
$(F^\delta_X)^*\bigwedge^r E$ is nef
\cite[p. 360, Proposition 2.3]{Fu}
and \cite[p. 360, Proposition 2.2]{Fu}.
Consider the vector bundles $V_{\underline a}$ in \eqref{ua}. We
noted earlier that
$V_{\underline a}$ is strongly semistable. The condition
that $\theta_{E,r}\, =\, 0$ implies that
$$
\text{degree}(V_{\underline a})\, \geq\, 0\, .
$$

A strongly semistable vector bundle $W$ over $X$ of nonnegative
degree is nef. To prove this, take any morphism
$$
\psi\, :\, Y\, \, \longrightarrow\, {\mathbb P}(W)\, ,
$$
where $Y$ is an irreducible smooth projective curve. Let
$h\, :\, {\mathbb P}(W)\, \longrightarrow\, X$ be the natural
projection. The pullback $\psi^*h^* W$ is semistable because
$W$ is strongly semistable. Since $\psi^*{\mathcal O}_{{\mathbb 
P}(W)}(1)$ is a quotient of $\psi^*h^* W$, and $\text{degree}
(\psi^*h^* W)\, \geq\, 0$, we conclude that
$\text{degree}(\psi^*{\mathcal O}_{{\mathbb P}(W)}(1))\, \geq\, 0$.
Hence ${\mathcal O}_{{\mathbb P}(W)}(1)$ is nef, meaning
$W$ is nef.

The above observation implies that the
vector bundle $V_{\underline a}$ is nef.

Since each successive quotient of the filtration of $(F^\delta_X)^*\bigwedge^r E$
is nef (as they are of the form $V_{\underline a}$), from Lemma \ref{lem1} we know
that $(F^\delta_X)^*\bigwedge^r E$ is nef. We noted earlier that
$\bigwedge^r E$ is nef if $(F^\delta_X)^*\bigwedge^r E$ is so.
Now using \eqref{rho} and \eqref{rho1} we conclude that 
${\mathcal O}_{\text{Gr}_r(E)}(1)$ is nef.

To complete the proof of the lemma we need to show
that ${\mathcal O}_{\text{Gr}_r(E)}(1)$ is not 
ample.

Consider $V_t/V_{t-1}$ in \eqref{e4}. Let
\begin{equation}\label{f}
f\, :\, {\rm Gr}_s(V_t/V_{t-1})\, \longrightarrow\, X
\end{equation}
be the Grassmann bundle 
parametrizing quotients of the fibers of $V_t/V_{t-1}$ of
dimension
\begin{equation}\label{s}
s\,:=\, r- \text{rank}(((F^\delta_X)^*E)/V_t)\, .
\end{equation}
Let
\begin{equation}\label{gamma}
\gamma\, :\, {\rm Gr}_s(V_t/V_{t-1})\, \longrightarrow\,
\text{Gr}_r((F^\delta_X)^*E)
\end{equation}
be the morphism of fiber bundles over $X$ that sends any
quotient $q\, :\, (V_t/V_{t-1})_x\, \longrightarrow\, Q$
to the quotient defined by the composition
$$
((F^\delta_X)^*E)_x\, \longrightarrow\, (((F^\delta_X)^*E)/V_{t-1})_x \,\longrightarrow\,
((((F^\delta_X)^*E)/V_{t-1})_x)/\text{kernel}(q)\, .
$$
To define $\gamma$ using the universal property of a Grassmannian, let
$$
f^*(V_t/V_{t-1})\, \stackrel{\widetilde{q}}{\longrightarrow}\, {\mathcal Q}
\,\longrightarrow\, 0
$$
be the universal quotient bundle of rank $s$
over ${\rm Gr}_s(V_t/V_{t-1})$. Now consider the diagram of homomorphisms
$$
\begin{matrix}
&& \text{kernel}(\widetilde{q}) &\hookrightarrow & V_t/V_{t-1}
&\stackrel{\widetilde{q}}{\longrightarrow} & {\mathcal Q}\\
&& \bigcap && \bigcap \\
f^*(F^\delta_X)^*E&\stackrel{\widehat{q}}{\longrightarrow} &
((F^\delta_X)^*E)/V_{t-1} & = & ((F^\delta_X)^*E)/V_{t-1} \\
&& ~ \Big\downarrow h\\
&& (((F^\delta_X)^*E)/V_{t-1})/\text{kernel}(\widetilde{q})
\end{matrix}
$$
Note that $\text{rank}((((F^\delta_X)^*E)/V_{t-1})/\text{kernel}(\widetilde{q}))
\,=\, r$ by \eqref{s}. Let
$$
\widetilde{\gamma}\, :\, {\rm Gr}_s(V_t/V_{t-1})\,\longrightarrow\,
\text{Gr}_r(f^*(F^\delta_X)^*E)\,=\, {\rm Gr}_s(V_t/V_{t-1})\times_X
\text{Gr}_r((F^\delta_X)^*E)
$$
be the morphism representing the surjective homomorphism $h\circ \widehat{q}$
in the above diagram. The morphism $\gamma$ in \eqref{gamma} is the composition
of $\widetilde{\gamma}$ with the natural projection
${\rm Gr}_s(V_t/V_{t-1})\times_X\text{Gr}_r((F^\delta_X)^*E)
\,\longrightarrow\, \text{Gr}_r((F^\delta_X)^*E)$.

The morphism $\gamma$ in \eqref{gamma} is clearly an embedding. 
Define the line bundle
$$
{\mathcal L} \, :=\, \det(((F^\delta_X)^*E)/V_t)\,=\,
\bigotimes_{i=t+1}^d\bigwedge\nolimits^{{\rm 
rank}(V_i/V_{i-1})} (V_i/V_{i-1})
$$
on $X$. We note that
\begin{equation}\label{f2}
\gamma^*{\mathcal O}_{\text{Gr}_r((F^\delta_X)^*E)}(1)\,=\, {\mathcal
O}_{{\rm Gr}_s(V_t/V_{t-1})}(1)\otimes f^*{\mathcal L}\, ,
\end{equation}
where ${\mathcal O}_{{\rm Gr}_s(V_t/V_{t-1})}(1)\, \longrightarrow
\,{\rm Gr}_s(V_t/V_{t-1})$ is the tautological line bundle.

For any integer $n$, the
line bundles ${\mathcal O}_{\text{Gr}_r((F^\delta_X)^*E)}(1)^{\otimes n}$
and ${\mathcal O}_{{\rm Gr}_s(V_t/V_{t-1})}(1)^{\otimes n}$
will be denoted by ${\mathcal O}_{\text{Gr}_r((F^\delta_X)^*E)}(n)$
and ${\mathcal O}_{{\rm Gr}_s(V_t/V_{t-1})}(n)$ respectively.

Assume that ${\mathcal O}_{\text{Gr}_r(E)}(1)$ is ample. Since
$F^\delta_X$ is a finite morphism, this implies that
${\mathcal O}_{\text{Gr}_r((F^\delta_X)^*E)}(1)$ is ample.
Therefore, the pullback
$\gamma^*{\mathcal O}_{\text{Gr}_r((F^\delta_X)^*E)}(1)$ is
ample because
$\gamma$ is an embedding. Hence for sufficiently large positive
$n$, we have
\begin{equation}\label{p}
\dim H^0({\rm Gr}_s(V_t/V_{t-1}),\, \gamma^*{\mathcal 
O}_{\text{Gr}_r((F^\delta_X)^*E)}(n))
\, =\, cn^{d_0} +\sum_{j=0}^{d_0-1} a_j n^j
\end{equation}
with $c \, >\, 0$, where $d_0\, =\, \dim {\rm Gr}_s(V_t/
V_{t-1})$.

For convenience, the integer ${\rm rank}(V_t/V_{t-1})$ will be denoted by $r_t$.

Let $K^{-1}_f\, :=\, K^{-1}_{{\rm Gr}_s(V_t/V_{t-1})}\bigotimes f^*
K_X$ be the relative anti-canonical line bundle for the projection
$f$ in \eqref{f}. We have,
\begin{equation}\label{K}
K^{-1}_f\,=\, {\mathcal O}_{{\rm Gr}_s(V_t/V_{t-1})}(r_t)\otimes 
((\bigwedge\nolimits^{r_t} (V_t/V_{t-1}))^{\otimes s})^*\, ,
\end{equation}
where $s$ is defined in \eqref{s}. The given condition that $\theta_{E,r}
\,=\, 0$ implies that
$$
-s\cdot \text{degree}(V_t/V_{t-1})\,=\, r_t\cdot
\text{degree}(((F^\delta_X)^*E)/V_t)\, .
$$
Hence the two line bundles
$((\bigwedge\nolimits^{r_t} (V_t/V_{t-1}))^{\otimes s})^*$
and ${\mathcal L}^{\otimes r_t}$ differ by tensoring with
a line bundle of degree zero. Therefore, from \eqref{K}
we conclude that
$$
({\mathcal O}_{{\rm Gr}_s(V_t/V_{t-1})}(1)\otimes {\mathcal L})^{
\otimes r_t}\,=\, K^{-1}_f\otimes f^*{\mathcal L}_0\, ,
$$
where ${\mathcal L}_0$ is a line bundle on $X$ of degree zero.
Now, from \eqref{f2},
\begin{equation}\label{p1}
\gamma^*{\mathcal O}_{\text{Gr}_r((F^\delta_X)^*E)}(r_t)\,=\, 
K^{-1}_f\otimes f^*{\mathcal L}_0\, .
\end{equation}

{}From the projection formula, and \eqref{p1},
\begin{equation}\label{p2}
H^0({\rm Gr}_s(V_t/V_{t-1}),\, \gamma^*{\mathcal O}_{\text{Gr}_r((F^\delta_X)^*E)}
(n\cdot r_t))\,=\, H^0(X,\, (f_*(K^{-1}_f)^{\otimes n})
\otimes {\mathcal L}^{\otimes n}_0)\, .
\end{equation}

We will show that the line bundle $\det (f_*(K^{-1}_f)^{\otimes
n})\,\longrightarrow\, X$ is trivial. For that, let $F_{\text{GL}_{r_t}}$ be
the principal $\text{GL}_{r_t}(k)$--bundle on $X$ defined by
the vector bundle $V_t/V_{t-1}$; the fiber of $F_{\text{GL}_{r_t}}$
over any point $x\, \in\, X$ is the space of all linear
isomorphisms from $k^{\oplus r_t}$ to the fiber $(V_t/V_{t-1})_x$.
Let $F_{\text{PGL}_{r_t}}\, :=\, F_{\text{GL}_{r_t}}/{\mathbb G}_m$
be the corresponding principal $\text{PGL}_{r_t}(k)$--bundle.
The vector bundle $f_*(K^{-1}_f)^{\otimes n}$ is the one associated
to the principal $\text{PGL}_{r_t}(k)$--bundle
$F_{\text{PGL}_{r_t}}$ for the $\text{PGL}_{r_t}(k)$--module
$H^0(\text{Gr}_s(k^{\oplus r_t}),\, 
(K^{-1}_{\text{Gr}_s(k^{\oplus r_t})})^{\otimes n})$ (the action 
of $\text{PGL}_{r_t}(k)$ on the space of sections is given by the 
standard action of $\text{PGL}_{r_t}(k)$ on
$\text{Gr}_s(k^{\oplus r_t})$. Since
$\text{PGL}_{r_t}(k)$ does not have any nontrivial character, the
line bundle $\det (f_*(K^{-1}_f)^{\otimes n})$ associated to
$F_{\text{PGL}_{r_t}}$ for the $\text{PGL}_{r_t}(k)$--module
$\bigwedge^{\rm top}H^0(\text{Gr}_s(k^{\oplus r_t}),\,
(K^{-1}_{\text{Gr}_s(k^{\oplus r_t})})^{\otimes n})$ is trivial.

As $\det (f_*(K^{-1}_f)^{\otimes n})$ is trivial and
$\text{degree}({\mathcal L})\,=\,0$,
$$
\text{degree}(((f_*(K^{-1}_f)^{\otimes n})
\otimes {\mathcal L}^{\otimes n}_0)\,=\, 0\, .
$$
Since $V_t/V_{t-1}$ is strongly semistable, the corresponding
principal $\text{GL}_{r_t}(k)$--bundle $F_{\text{GL}_{r_t}}$
is strongly semistable. Therefore, the associated vector
bundle $f_* (K^{-1}_f)^{\otimes n}$ is also semistable
(see \cite[p. 285, Theorem 3.18]{RR} and
\cite[p. 288, Theorem 3.23]{RR}). This implies that
$(f_*(K^{-1}_f)^{\otimes n})\bigotimes {\mathcal L}^{\otimes n}_0$ is semistable.

For a semistable vector bundle $\mathcal V$ on $X$ of degree zero, any nonzero
section $\sigma\,:\, {\mathcal O}_X\,\longrightarrow\, \mathcal V$ is nowhere
vanishing. Indeed, this follows immediately from the semistability condition that
the line bundle of $\mathcal V$ generated by the image of $\sigma$ is of
nonpositive degree. Consequently,
$$
\dim H^0(X,\, {\mathcal V})\, \leq\, {\rm rank}({\mathcal V})\, .
$$

Since $(f_*(K^{-1}_f)^{\otimes n})\bigotimes {\mathcal L}^{\otimes n}_0$ is semistable
of degree zero, we have
\begin{equation}\label{eqinq}
\dim H^0(X,\, (f_*(K^{-1}_f)^{\otimes n})\otimes {\mathcal L}^{\otimes n}_0)
\, \leq\,{\rm rank}((f_*(K^{-1}_f)^{\otimes n})
\otimes {\mathcal L}^{\otimes n}_0)\,=\,
{\rm rank}((f_*(K^{-1}_f)^{\otimes n}))
\end{equation}
for all $n\, >\, 0$.

We have $R^jf_*((K^{-1}_f)^{\otimes n})\,=\, 0$ for
$j\, , n\, \geq\, 1$. Hence from the Riemann--Roch theorem for the restriction
$(K^{-1}_f)^{\otimes n}\vert_{f^{-1}(x)}$,
$x\,\in\, X$, we conclude that ${\rm rank}((f_*(K^{-1}_f)^{\otimes n}))$
is a polynomial of degree at most $d_0-1$ (which is the dimension of
the fibers of $f$). Therefore, using \eqref{p2} and \eqref{eqinq}
we conclude that
$$
\dim H^0({\rm Gr}_s(V_t/V_{t-1}),\, \gamma^*{\mathcal 
O}_{\text{Gr}_r((F^\delta_X)^*E)} (n\cdot {\rm rank}( V_t/V_{t-1})))
$$
is a polynomial of degree at most $d_0-1$. But this contradicts
\eqref{p}.

We assumed that ${\mathcal O}_{\text{Gr}_r(E)}(1)$ is ample, and were
led to the above contradiction. Therefore, we conclude that ${\mathcal
O}_{\text{Gr}_r(E)}(1)$ is not ample. This completes the proof
of the lemma.
\end{proof}

Lemma \ref{thm1-l1}, Lemma \ref{thm1-l2} and Lemma \ref{thm1-l3} together
give he following:

\begin{theorem}\label{thm1}
If $\theta_{E,r}\, >\, 0$, then the line bundle
${\mathcal O}_{{\rm Gr}_r(E)}(1)\, \longrightarrow\,
{\rm Gr}_r(E)$ in \eqref{f1} is ample.

If $\theta_{E,r}\, =\, 0$, then
${\mathcal O}_{{\rm Gr}_r(E)}(1)$ is nef but not ample.

If $\theta_{E,r}\, <\, 0$, then ${\mathcal O}_{{\rm
Gr}_r(E)}(1)$ is not nef.
\end{theorem}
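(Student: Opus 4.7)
The theorem is simply the trichotomy corresponding to the sign of the rational number $\theta_{E,r}$, and each of the three cases is already settled by one of the preceding Lemmas \ref{thm1-l1}, \ref{thm1-l2}, \ref{thm1-l3}. My plan is therefore just to assemble those three results: if $\theta_{E,r}>0$ cite Lemma \ref{thm1-l1}; if $\theta_{E,r}=0$ cite Lemma \ref{thm1-l3}; if $\theta_{E,r}<0$ cite Lemma \ref{thm1-l2}. Since these three cases are mutually exclusive and exhaustive, this covers every possibility for the sign of $\theta_{E,r}$, and the theorem follows.

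For orientation, the overall strategy embodied in the three lemmas is the following. In the positive case one factors through the Pl\"ucker embedding and reduces, via the Frobenius pullback invariance of ampleness \cite[Prop.~4.3]{Ha}, to proving ampleness of $\bigwedge^r(F^\delta_X)^*E$; the latter admits a filtration by strongly semistable bundles $V_{\underline a}$, each of strictly positive degree by the hypothesis on $\theta_{E,r}$, hence each ample, and one concludes by Lemma \ref{lem1}. The borderline nef case runs identically with ``positive'' replaced by ``nonnegative'' and ``ample'' by ``nef''. The negative case is handled by the Parameswaran--Subramanian construction \cite{PS}, which produces a finite cover $f\colon Y\to X$ and a subbundle $W\subset f^*(V_t/V_{t-1})$ whose slope is arbitrarily close to $\mu(V_t/V_{t-1})$; a direct degree computation then shows that the resulting quotient of $f^*(F^\delta_X)^*E$ has negative degree, giving the required destabilising map to $\text{Gr}_r(E)$.

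The only genuinely nontrivial step in the overall picture, and therefore the one I would treat most carefully, is the non-ampleness half of the $\theta_{E,r}=0$ case: one must compare the asymptotic growth of $h^0$ of powers of $\mathcal{O}_{\text{Gr}_r(E)}(1)$ restricted along the embedding $\gamma$ of a subsidiary Grassmann bundle. The point is that when $\theta_{E,r}=0$ the pullback of $\mathcal{O}_{\text{Gr}_r((F^\delta_X)^*E)}(r_t)$ agrees, up to a degree-zero twist, with the relative anti-canonical bundle $K^{-1}_f$; a projection formula plus the fact that $f_*(K^{-1}_f)^{\otimes n}$ is associated to a $\text{PGL}_{r_t}$-bundle (so has trivial determinant) then forces the corresponding $h^0$ to grow as a polynomial of degree at most $d_0-1$, which is incompatible with ampleness. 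This $\text{PGL}$-reduction is the delicate ingredient; the rest is formal.
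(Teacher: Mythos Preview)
Your proposal is correct and matches the paper's own approach exactly: the paper's proof of Theorem~\ref{thm1} is literally the single sentence that Lemmas~\ref{thm1-l1}, \ref{thm1-l2} and \ref{thm1-l3} together give the result. Your additional summary of the strategies behind each lemma is accurate and faithful to what the paper does.
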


\section{The nef cone of $\text{G}\mathtt{r}_r(E)$}\label{s-n-c}

In this section we will compute the nef cone of $\text{Gr}_r(E)$
using Theorem \ref{thm1}. Being a closed cone, it is generated by its
boundary. For notational reasons, it will be
convenient to treat the cases of characteristic zero and
positive characteristic separately.

For a smooth projective variety $Z$, the real N\'eron--Severi group
$\text{NS}(Z)_{\mathbb R}$ is defined to be
\begin{equation}\label{N-Sg}
\text{NS}(Z)_{\mathbb R}\,:=\, ({\rm Pic}(Z)/{\rm Pic}^0(Z))\otimes_{\mathbb Z}
\mathbb R\, ,
\end{equation}
where ${\rm Pic}^0(Z)$ is the connected component, containing the identity
element, of the Picard group ${\rm Pic}(Z)$ of $Z$.
 
\subsection{Characteristic is zero} In this case, the number
$\delta$ in \eqref{e4} is zero.

As in \eqref{vp}, $\varphi$ is the projection of
$\text{Gr}_r(E)$ to $X$. Fix a line bundle $L_1$ over $X$
of degree one. The line bundle $\varphi^*L_1$ will be
denoted by ${\mathcal L}$. The real N\'eron--Severi group 
$\text{NS}(\text{Gr}_r(E))_{\mathbb R}$
is freely generated by $\mathcal L$ and 
${\mathcal O}_{\text{Gr}_r(E)}(1)$.

Although $\theta_{E,r}$ in \eqref{e4} need not be an integer, we note that
${\mathcal L}^{\otimes -\theta_{E,r}}$ is well defined as an
element of $\text{NS}(\text{Gr}_r(E))_{\mathbb R}$ because
$\theta_{E,r}\, \in\, \mathbb Q$.

\begin{proposition}\label{prop-e1}
The boundary of the nef cone in ${\rm NS}({\rm Gr}_r(E))_{\mathbb R}$
is given by $\mathcal L$ and ${\mathcal O}_{{\rm Gr}_r(E)}
(1)\bigotimes {\mathcal L}^{\otimes -\theta_{E,r}}$.
\end{proposition}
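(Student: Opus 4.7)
The plan is to identify the two extreme rays of the nef cone by combining Theorem \ref{thm1} with the behavior of $\theta_{E,r}$ under twisting $E$ by line bundles on $X$.

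First I would set up coordinates: every class in $\text{NS}(\text{Gr}_r(E))_{\mathbb R}$ is of the form $a\mathcal{L} + b \cdot \mathcal{O}_{\text{Gr}_r(E)}(1)$ with $a,b \in \mathbb R$, so the nef cone is a closed convex cone in the plane, and since ample classes exist it has nonempty interior and thus two boundary rays. Restricting such a class to a fiber of $\varphi$ gives the $b$-th tensor power of the tautological bundle on a Grassmannian, which is nef only for $b \geq 0$. The class $\mathcal{L} = \varphi^* L_1$ is nef (being the pullback of a positive-degree line bundle on a curve) but is trivial on the fibers of $\varphi$, so it lies on the boundary. This identifies one extreme ray as the one spanned by $\mathcal{L}$.

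For the second boundary ray, I would use twisting. For any line bundle $L$ on $X$, the canonical identification $\text{Gr}_r(E \otimes L) = \text{Gr}_r(E)$ sends $\mathcal{O}_{\text{Gr}_r(E \otimes L)}(1)$ to $\mathcal{O}_{\text{Gr}_r(E)}(1) \otimes \varphi^* L^{\otimes r}$. Since tensoring the Harder--Narasimhan filtration \eqref{e2} with $L$ yields the Harder--Narasimhan filtration of $E \otimes L$, the integer $t$ from \eqref{e3} is unchanged, and a direct computation from \eqref{e4} gives
$$
\theta_{E \otimes L, r}\,=\, \theta_{E,r} + r\cdot \text{degree}(L)\, .
$$
Applying Theorem \ref{thm1} to $E \otimes L$ when $L$ has integer degree $e$, the class $\mathcal{O}_{\text{Gr}_r(E)}(1) + re\cdot \mathcal{L}$ is ample, nef-but-not-ample, or fails to be nef according as $\theta_{E,r}+re$ is positive, zero, or negative.

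The final step is to pin down the second boundary ray at the precise location $c = -\theta_{E,r}$, even though this value need not lie in $r \mathbb Z$. I would do this by passing to finite covers $f: Y \to X$: nef-ness is both preserved and reflected by the induced finite surjective morphism $\text{Gr}_r(f^*E) \to \text{Gr}_r(E)$; one has $\theta_{f^*E, r} = N \cdot \theta_{E,r}$ for a cover of degree $N$, and the pullback of $\mathcal{O}_{\text{Gr}_r(E)}(1) + c\mathcal{L}$ corresponds to $\mathcal{O}_{\text{Gr}_r(f^*E)}(1) + cN\mathcal{L}_Y$ in the analogous coordinates on $Y$. As $N$ ranges over a suitable sequence of integers the integer transition points supplied by Theorem \ref{thm1} accumulate at $-\theta_{E,r}$; combined with closure of the nef cone this forces the transition to happen exactly at $c=-\theta_{E,r}$, so the second extreme ray is spanned by $\mathcal{O}_{\text{Gr}_r(E)}(1) \otimes \mathcal{L}^{\otimes -\theta_{E,r}}$. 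The main obstacle is precisely this density step: because $\theta_{E,r} \in \mathbb Q$ is not generally an integer multiple of $r$, one cannot realize the conjectural boundary class as $\mathcal{O}_{\text{Gr}_r(E')}(1)$ for an honest vector bundle $E'$ on $X$ and invoke Theorem \ref{thm1} directly; the finite-cover argument (or equivalently a systematic use of $\mathbb Q$-twists combined with closure of the nef cone) is what bridges the integer-valued applications of Theorem \ref{thm1} with the rational boundary predicted by the twist formula.
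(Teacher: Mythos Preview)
Your proposal is correct and follows essentially the same route as the paper: both arguments hinge on the twisting formula $\theta_{E\otimes L,r}=\theta_{E,r}+r\cdot\deg(L)$, pass to a finite cover $f:Y\to X$ to make $\theta_{E,r}/r$ integral, and then invoke Theorem~\ref{thm1}. The only difference is that the paper dispenses with your accumulation step by choosing a single cover of degree divisible by the denominator of $\theta_{E,r}/r$, twisting once to a bundle $F$ with $\theta_{F,r}=0$, and applying the ``nef but not ample'' case of Theorem~\ref{thm1} directly.
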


\begin{proof}
We will first show that it is enough to treat the case where $\theta_{E,r}$
is a multiple of $r$. In fact, this argument is standard (see
\cite[p. 23, Lemma 6.2.8]{Laz}). However, we describe the details for
completeness.

Write
$$
\theta_{E,r}\,=\, \frac{p_1r}{q_1}\, ,
$$
where $p_1$ and $q_1$ are integers with $q_1\, >\, 0$. Take a pair $(Y\, ,f)$, where
$Y$ is an irreducible smooth projective curve, and $f$ is a morphism from
$Y$ to $X$, such that $\text{degree}(f)$ is a multiple of $q_1$.
The natural map
$$
\gamma\, :\, \text{Gr}_r(f^*E)\, \longrightarrow\, \text{Gr}_r(E)
$$
produces an isomorphism between $\text{NS}(\text{Gr}_r(E))_{\mathbb R}$
and $\text{NS}(\text{Gr}_r(f^*E))_{\mathbb R}$. This isomorphism
preserves the nef cones. Therefore, it is enough to prove the
proposition for $(Y\, , f^*E)$.
Note that $\theta_{f^*E,r}\,=\, \frac{{\rm degree}(f)p_1r}{q_1}$
is a multiple of $r$.

Hence we can assume that $\theta_{E,r}/r$ is an integer.

Consider the vector bundle
$$
F\, :=\, E\otimes L^{\otimes -\frac{\theta_{E,r}}{r}}_1\, .
$$
Note that $\text{Gr}_r(E)\,=\, \text{Gr}_r(F)$. From 
\eqref{e4} and the definition of $F$ it follows immediately
that
$$
\theta_{F,r}\,=\, 0\, .
$$

Since $\theta_{F,r}\,=\, 0$, from the second part of
Theorem \ref{thm1} we know that the nef cone in $\text{NS}
(\text{Gr}_r(F))_{\mathbb R}$ is generated by ${\mathcal 
O}_{\text{Gr}_r(F)}(1)$ and $\mathcal L$ (it is
considered as a line bundle on $\text{Gr}_r(F)$
using the identification of
$\text{Gr}_r(F)$ with $\text{Gr}_r(E)$). The proposition
follows immediately from this description of the nef cone
in $\text{NS} (\text{Gr}_r(F))_{\mathbb R}$ using the
identification of $\text{Gr}_r(F)$ with $\text{Gr}_r(E)$.
\end{proof}

\begin{remark}\label{rem1}
{\rm We note that the two generators of the nef cone given in
Proposition \ref{prop-e1} lie in the rational N\'eron--Severi group
${\rm NS}({\rm Gr}_r(E))_{\mathbb Q}\,:=\,({\rm Pic}(Z)/{\rm Pic}^0(Z))
\otimes_{\mathbb Z} \mathbb Q$.}
\end{remark}

\subsection{Characteristic is positive}

Let $p\, >\, 0$ be the characteristic of $k$.
Consider $\delta$ in \eqref{e4}.
Let $\varphi_1\, :\, \text{Gr}_r((F^\delta_X)^*E)\,
\longrightarrow\, X$ be the natural projection. Define the
line bundle
$$
{\mathcal L}_1\, :=\, \varphi^*_1 L_1\, \longrightarrow\, X\, ,
$$
where $L_1$ is a fixed line bundle on $X$ of degree one.

\begin{lemma}\label{lem-1}
The nef cone in ${\rm NS}({\rm Gr}_r((F^\delta_X)^*E))_{
\mathbb R}$ (defined in \eqref{N-Sg}) is generated by ${\mathcal L}_1$ and ${\mathcal 
O}_{{\rm Gr}_r((F^\delta_X)^*E)}(1)\bigotimes {\mathcal L}^{\otimes 
-\theta_{(F^\delta_X)^*E,r}}_1$.
\end{lemma}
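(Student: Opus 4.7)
The plan is to transplant the argument of Proposition \ref{prop-e1} onto the bundle $(F^\delta_X)^*E$. The crucial observation is that, by the very choice of $\delta$, the Harder--Narasimhan filtration \eqref{e2} of $(F^\delta_X)^*E$ is preserved under pullback by every nonconstant morphism from a smooth projective curve; this is exactly the feature of characteristic zero that the proof of Proposition \ref{prop-e1} exploits. So once one passes to $(F^\delta_X)^*E$, the positive-characteristic setting presents no new obstruction and the earlier argument can be copied almost verbatim.

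First, I would reduce to the case where $\theta := \theta_{(F^\delta_X)^*E,r}$ is an integer multiple of $r$. Writing $\theta = p_1 r/q_1$ with $q_1 > 0$, I choose a nonconstant morphism $f \colon Y \to X$ from a smooth projective curve with $\deg(f)$ divisible by $q_1$. Because the filtration in \eqref{e2} pulls back to the Harder--Narasimhan filtration of $f^*(F^\delta_X)^*E$, a direct computation from \eqref{e4} yields $\theta_{f^*(F^\delta_X)^*E,r} = \deg(f) \cdot \theta$, which is a multiple of $r$. The induced morphism of Grassmann bundles is finite and surjective, and both N\'eron--Severi groups are of rank two, generated by the tautological line bundle and the pullback of a degree-one line bundle from the base, so the nef cones correspond under this pullback. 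It therefore suffices to prove the lemma for $f^*(F^\delta_X)^*E$; in other words, we may assume $\theta/r \in \mathbb{Z}$.

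Next, set $F := (F^\delta_X)^*E \otimes L_1^{\otimes -\theta/r}$. Tensoring by a line bundle shifts each slope of the Harder--Narasimhan filtration uniformly and preserves strong semistability of the quotients, so $\{V_i \otimes L_1^{\otimes -\theta/r}\}$ is the Harder--Narasimhan filtration of $F$ and is again stable under curve pullbacks (hence $\delta_F = 0$ works for $F$). Substituting into \eqref{e4} gives $\theta_{F,r} = 0$. Moreover $\mathrm{Gr}_r(F) = \mathrm{Gr}_r((F^\delta_X)^*E)$ canonically, and under this identification
$$
\mathcal{O}_{\mathrm{Gr}_r(F)}(1) \,=\, \mathcal{O}_{\mathrm{Gr}_r((F^\delta_X)^*E)}(1) \otimes \mathcal{L}_1^{\otimes -\theta}.
$$

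Finally, I would apply Theorem \ref{thm1} to $F$: since $\theta_{F,r} = 0$, the tautological line bundle $\mathcal{O}_{\mathrm{Gr}_r(F)}(1)$ is nef but not ample, and hence spans one boundary ray of the nef cone. The other boundary ray is spanned by $\mathcal{L}_1$, which is nef but not ample since it is pulled back from the curve $X$. As $\mathrm{NS}(\mathrm{Gr}_r((F^\delta_X)^*E))_{\mathbb R}$ has rank two, the nef cone is exactly the closed cone spanned by these two classes, yielding the asserted description. The only step requiring genuine care is the reduction in the second paragraph: one must verify that the pullback map on N\'eron--Severi groups respects the nef cone structure and that $\theta$ scales by $\deg(f)$, both of which rest on the stability of the Harder--Narasimhan filtration under pullback by $f$, that is, on the defining property of $\delta$.
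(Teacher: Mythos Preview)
Your proof is correct and follows exactly the approach the paper intends: the paper's own proof simply states that it is ``exactly identical to the proof of Proposition \ref{prop-e1}'', and you have carried out that transplant, supplying the one additional justification needed in positive characteristic (that the Harder--Narasimhan filtration of $(F^\delta_X)^*E$ is stable under pullback by curves, by the choice of $\delta$).
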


\begin{proof}
The proof is exactly identical to the proof of Proposition
\ref{prop-e1}. We refrain from repeating it.
\end{proof}

As in \eqref{vp}, the projection of $\text{Gr}_r(E)$ to $X$
will be denoted by $\varphi$. Define
$$
{\mathcal L}\, :=\, \varphi^*L_1\, .
$$

\begin{proposition}\label{prop-e2}
The boundary of the nef cone in ${\rm NS}({\rm Gr}_r(E))_{\mathbb R}$
is given by $\mathcal L$ and ${\mathcal O}_{{\rm Gr}_r(E)}
(p^\delta)\bigotimes {\mathcal L}^{\otimes - \theta_{(F^\delta_X)^*E,r}}$.
\end{proposition}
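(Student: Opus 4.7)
The natural strategy is to reduce the statement to Lemma \ref{lem-1} by transporting it along the finite surjective morphism
\[
\beta\, :\, \text{Gr}_r((F^\delta_X)^*E)\,=\,(F^\delta_X)^*\text{Gr}_r(E)\,\longrightarrow\,\text{Gr}_r(E)
\]
of diagram \eqref{l1}. Because $F^\delta_X$ is finite, flat, and surjective, so is $\beta$; hence a line bundle $M$ on $\text{Gr}_r(E)$ is nef if and only if $\beta^*M$ is nef (again by \cite[Proposition 2.3]{Fu} and \cite[Proposition 2.2]{Fu}, already invoked in Lemma \ref{thm1-l3}). So $\beta^*$ carries the nef cone of $\text{Gr}_r(E)$ onto the image of its domain of definition inside the nef cone of $\text{Gr}_r((F^\delta_X)^*E)$.

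The next step is to show that $\beta^*\, :\, \text{NS}(\text{Gr}_r(E))_{\mathbb R}\,\longrightarrow\,\text{NS}(\text{Gr}_r((F^\delta_X)^*E))_{\mathbb R}$ is a linear isomorphism. Both groups are two-dimensional: one is generated by $\mathcal L$ and $\mathcal{O}_{\text{Gr}_r(E)}(1)$, the other by $\mathcal L_1$ and $\mathcal{O}_{\text{Gr}_r((F^\delta_X)^*E)}(1)$. The tautological bundles correspond under $\beta^*$, i.e.\ $\beta^*\mathcal{O}_{\text{Gr}_r(E)}(1)\,=\,\mathcal{O}_{\text{Gr}_r((F^\delta_X)^*E)}(1)$, as already used in the proof of Lemma \ref{thm1-l2}. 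For the other generator, since absolute Frobenius multiplies degrees by $p$, the class of $(F^\delta_X)^*L_1$ equals $p^\delta$ times that of $L_1$ in $\text{NS}(X)_{\mathbb R}$, whence
\[
\beta^*\mathcal L\,=\,\varphi_1^*(F^\delta_X)^*L_1\,=\,\mathcal L_1^{\otimes p^\delta}
\]
in $\text{NS}(\text{Gr}_r((F^\delta_X)^*E))_{\mathbb R}$. Thus $\beta^*$ sends a basis to a basis (up to positive scalars), so it is an $\mathbb R$-linear isomorphism, and it therefore maps the nef cone of $\text{Gr}_r(E)$ bijectively onto that of $\text{Gr}_r((F^\delta_X)^*E)$.

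Now I would transport the boundary rays back. Lemma \ref{lem-1} states that the boundary rays on $\text{Gr}_r((F^\delta_X)^*E)$ are the rays spanned by $\mathcal L_1$ and by $\mathcal{O}_{\text{Gr}_r((F^\delta_X)^*E)}(1)\bigotimes\mathcal L_1^{\otimes -\theta_{(F^\delta_X)^*E,r}}$. The ray of $\mathcal L_1$ is the image of the ray of $\mathcal L$, since $\beta^*\mathcal L\,=\,\mathcal L_1^{\otimes p^\delta}$. For the second ray, I would simply compute
\[
\beta^*\bigl(\mathcal{O}_{\text{Gr}_r(E)}(p^\delta)\otimes\mathcal L^{\otimes -\theta_{(F^\delta_X)^*E,r}}\bigr)\,=\,\mathcal{O}_{\text{Gr}_r((F^\delta_X)^*E)}(p^\delta)\otimes\mathcal L_1^{\otimes -p^\delta\theta_{(F^\delta_X)^*E,r}},
\]
which lies on the required ray. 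Thus the two boundary rays of the nef cone on $\text{Gr}_r(E)$ are spanned by $\mathcal L$ and by $\mathcal{O}_{\text{Gr}_r(E)}(p^\delta)\bigotimes\mathcal L^{\otimes -\theta_{(F^\delta_X)^*E,r}}$, as claimed.

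The only real bookkeeping issue—and hence the single mildly subtle point—is the appearance of the factor $p^\delta$ in the exponent of $\mathcal{O}_{\text{Gr}_r(E)}$, which is forced by the identity $\beta^*\mathcal L\,=\,\mathcal L_1^{\otimes p^\delta}$ and the need to clear denominators so as to land in a genuine line-bundle class (rather than merely in $\text{NS}_{\mathbb Q}$). Everything else is formal once the diagram \eqref{l1} has been set up and the finite-surjective descent statement for nefness has been invoked.
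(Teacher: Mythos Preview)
Your argument is correct and follows essentially the same route as the paper: reduce to Lemma \ref{lem-1} via the morphism $\beta$ in diagram \eqref{l1}, using that $\beta^*$ induces an isomorphism of real N\'eron--Severi groups preserving the nef cones, together with the identities $\beta^*{\mathcal O}_{{\rm Gr}_r(E)}(1)\,=\,{\mathcal O}_{{\rm Gr}_r((F^\delta_X)^*E)}(1)$ and $(F^\delta_X)^*L_1\,=\,L_1^{\otimes p^\delta}$. Your write-up is more explicit about the bookkeeping, but the approach is the same.
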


\begin{proof}
Consider the commutative diagram of morphisms in \eqref{l1}.
The morphism $\beta$ in this diagram produces an isomorphism
between $\text{NS}(\text{Gr}_r(E))_{\mathbb R}$ and
$\text{NS}(\text{Gr}_r((F^\delta_X)^*E))_{\mathbb R}$.
This isomorphism preserves the nef cones.

We have $\beta^*{\mathcal O}_{{\rm Gr}_r(E)} (1)\,=\,
{\mathcal O}_{{\rm Gr}_r((F^\delta_X)^*E)} (1)$, and
$(F^\delta_X)^*L_1\, =\, L^{\otimes p^\delta}_1$.
Hence the proposition follows from Lemma \ref{lem-1}.
\end{proof}

\begin{remark}\label{rem2}
{\rm The two generators of the nef cone
given in Proposition \ref{prop-e2} lie
in ${\rm NS}({\rm Gr}_r(E))_{\mathbb Q}$.}
\end{remark}

\section{The nef cone of flag bundles}\label{se-n-g}

Fix integers
$$
0\, <\, r_1 \, <\, r_2\, <\, \cdots \, <\, r_{\nu-1}
\, <\, r_\nu \, <\, \text{rank}(E)\, .
$$
Let
$$
\Phi\, :\, {\rm Fl}(E) \, \longrightarrow\, X
$$
be the corresponding flag bundle; so for any $x\, \in\, X$,
the fiber $\Phi^{-1}(x)$ parametrizes all filtrations of
linear subspaces
\begin{equation}\label{fl2}
E_x \, \supset\, S_1 \, \supset \, S_2\, \supset\, \cdots \,
\supset\, S_{\nu-1} \, \supset \, S_\nu 
\end{equation}
such that $\dim E_x - \dim S_i \,=\, r_i$ for all
$i\, \in\, [1\, , \nu]$.

For each $i\, \in\, [1\, , \nu]$, let $\text{Gr}_{r_i}(E)$ be
the Grassmann bundle over $X$ parametrizing all the $r_i$ 
dimensional quotients of the fibers of $E$. Let
\begin{equation}\label{fl3}
\phi_i\, :\, {\rm Fl}(E) \, \longrightarrow\, \text{Gr}_{r_i}(E)
\end{equation}
be the natural projection that sends any filtration as in
\eqref{fl2} to $E_x/S_i$. Let
$$
\omega_i\, \in\, {\rm NS}({\rm Gr}_{r_i}(E))_{\mathbb R}
$$
be the element ${\mathcal O}_{{\rm Gr}_{r_i}(E)}
(1)\bigotimes {\mathcal L}^{\otimes -\theta_{E,r_i}}$
(respectively, ${\mathcal O}_{{\rm Gr}_{r_i}(E)}
(p^\delta)\bigotimes {\mathcal L}^{\otimes -
\theta_{(F^\delta_X)^*E,r_i}}$) in Proposition
\ref{prop-e1} (respectively, Proposition \ref{prop-e2})
if the characteristic of $k$ is 
zero (respectively, positive). Define
$$
\widetilde{\omega}_i\, :=\, \phi^*_i\omega_i\, \in
{\rm NS}({\rm Fl}(E))_{\mathbb R}\, ,
$$
where $\phi_i$ is the projection in \eqref{fl3}.

\begin{theorem}\label{th-fl}
The nef cone in ${\rm NS}({\rm Fl}(E))_{\mathbb R}$ is generated
by $\{\widetilde{\omega}_i\}_{i=1}^\nu\bigcup \Phi^* {\mathcal L}'$,
where ${\mathcal L}'$ is a line bundle over $X$ of degree one.
\end{theorem}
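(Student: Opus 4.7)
My strategy is the standard two-step identification of a convex cone: first show each listed generator is nef, then show that every nef class lies in their positive span. The group $\mathrm{NS}(\mathrm{Fl}(E))_{\mathbb R}$ has rank $\nu+1$, freely generated by $\Phi^*\mathcal{L}'$ together with the classes $\phi_i^*\mathcal{O}_{\mathrm{Gr}_{r_i}(E)}(1)$ for $i=1,\ldots,\nu$, because the fiber of $\Phi$ is the complete flag variety of type $(r_1,\ldots,r_\nu)$, whose Picard rank is $\nu$ with generators the tautological Plücker bundles from the factor Grassmannians. Nefness of the stated generators is then immediate: $\mathcal{L}'$ has degree one on the curve $X$ and each $\omega_i$ lies on the boundary of $\mathrm{Nef}(\mathrm{Gr}_{r_i}(E))$ by Proposition~\ref{prop-e1} or \ref{prop-e2}, and both properties are preserved under pullback by $\Phi$ and $\phi_i$ respectively.

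For the converse, I would write $L = a_0\Phi^*\mathcal{L}' + \sum_i a_i\phi_i^*\mathcal{O}_{\mathrm{Gr}_{r_i}(E)}(1)$ in this basis and set $\tau_i := \theta_{E,r_i}$ in characteristic zero, respectively $\tau_i := \theta_{(F^\delta_X)^*E,r_i}/p^\delta$ in positive characteristic. A direct change of basis using the explicit expression for $\omega_i$ from Propositions~\ref{prop-e1} and \ref{prop-e2} shows that $L$ lies in the cone generated by $\Phi^*\mathcal{L}'$ and the $\widetilde{\omega}_i$'s if and only if
\[
a_i \,\geq\, 0 \text{ for } i \,=\, 1,\ldots,\nu, \qquad a_0 + \sum_{i=1}^\nu a_i \tau_i \,\geq\, 0\, .
\]
The inequalities $a_i \geq 0$ are obtained by restricting $L$ to a fiber $\Phi^{-1}(x)$: the class $\Phi^*\mathcal{L}'$ restricts trivially, while the $\phi_i^*\mathcal{O}_{\mathrm{Gr}_{r_i}(E)}(1)$'s restrict to the simplicial nef generators of the flag variety $\mathrm{Fl}(E_x)$, and nef classes restrict to nef classes.

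The last inequality requires an extremal test curve in $\mathrm{Fl}(E)$, modelled on the construction in Lemma~\ref{thm1-l2}. For every $\epsilon>0$, the plan is to produce a nonconstant morphism $g\colon Y\to \mathrm{Fl}(E)$ from a smooth projective curve $Y$ such that, writing $d := \deg(g^*\Phi^*\mathcal{L}')>0$, one has $\deg(g^*\phi_i^*\mathcal{O}_{\mathrm{Gr}_{r_i}(E)}(1)) \leq d(\tau_i+\epsilon)$ for every $i$. Pairing the class $g_*[Y]$ with $L$ and using the already-established $a_i\geq 0$ yields $0 \leq d(a_0 + \sum_i a_i\tau_i) + \epsilon d\sum_i a_i$, and letting $\epsilon\to 0$ gives the required inequality.

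To construct $g$, I would take a finite cover $f\colon Y \to X$ over which the Harder--Narasimhan filtration of $(F^\delta_X)^*E$ pulls back to the Harder--Narasimhan filtration of the pullback (cf.~\eqref{e1}), and build a filtration of $f^*(F^\delta_X)^*E$ of the ranks $\mathrm{rank}(E)-r_i$ prescribed by the flag type by combining the pulled-back HN subbundles $f^*V_t$ with subbundles of each strongly semistable quotient $f^*(V_t/V_{t-1})$ whose normalized slopes $\mu(W)/\deg(f)$ lie within $\epsilon$ of $\mu(V_t/V_{t-1})$; existence of such $W$'s is \cite[p.~525, Theorem~4.1]{PS}. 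Composing the resulting morphism $Y \to \mathrm{Fl}((F^\delta_X)^*E) = (F^\delta_X)^*\mathrm{Fl}(E)$ with $\beta$ in \eqref{l1} gives $g$, and the degree estimates on the $g^*\phi_i^*\mathcal{O}_{\mathrm{Gr}_{r_i}(E)}(1)$ are the same calculations that produced \eqref{Q} in the proof of Lemma~\ref{thm1-l2}. The main technical obstacle is that when several indices $i$ require subbundles of the same $f^*(V_t/V_{t-1})$, those subbundles must be chosen nested; this is handled by iterating \cite[p.~525, Theorem~4.1]{PS}, passing to a further cover if needed, to produce an entire chain of near-optimal subbundles inside each strongly semistable factor.
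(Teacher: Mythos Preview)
Your plan is correct and shares its first two moves with the paper: both arguments note that the listed $\nu+1$ classes are nef and span $\mathrm{NS}(\mathrm{Fl}(E))_{\mathbb R}$, and both obtain the inequalities $a_i\ge 0$ by restricting to a fibre $\Phi^{-1}(x)$ and invoking the fact (from \cite{Br}) that $\mathrm{Nef}(\Phi^{-1}(x))$ is the simplicial cone on the $\widetilde{\omega}_{x,i}$.

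The divergence is in how the remaining inequality $a_0+\sum_i a_i\tau_i\ge 0$ is obtained. The paper does not build any test curve in $\mathrm{Fl}(E)$; after recording the fibre information it simply asserts that the conclusion follows from Propositions~\ref{prop-e1} and~\ref{prop-e2}. The implicit point is that each $\widetilde{\omega}_i=\phi_i^*\omega_i$ lies on $\partial\mathrm{Nef}(\mathrm{Fl}(E))$ because $\omega_i\in\partial\mathrm{Nef}(\mathrm{Gr}_{r_i}(E))$ and $\phi_i$ is surjective, and that together with the fibrewise constraint this pins down the cone. Your route instead manufactures an explicit horizontal curve $g\colon Y\to\mathrm{Fl}(E)$ which is simultaneously near-extremal for every $\phi_i$; this makes the missing dual Mori-cone generator visible and turns the terse appeal to Propositions~\ref{prop-e1}--\ref{prop-e2} into a direct computation.

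The price of your approach is the nested-subbundle issue you flag: when several $r_i$ fall into the same Harder--Narasimhan block $V_t/V_{t-1}$, you need a \emph{chain} of subbundles of prescribed ranks in $f^*(V_t/V_{t-1})$, each of slope within $\epsilon$ of $\mu(V_t/V_{t-1})$. Iterating \cite{PS} does work here, but you should record why: if $W\subset f^*(V_t/V_{t-1})$ already has normalized slope within $\epsilon$ of the ambient slope, then because $f^*(V_t/V_{t-1})$ is semistable every Harder--Narasimhan slope of the quotient $f^*(V_t/V_{t-1})/W$ is trapped in an interval of length $O(\epsilon\cdot\mathrm{rank})$ around $\mu(V_t/V_{t-1})$, so a further application of \cite{PS} to that quotient again produces a near-optimal subbundle, and the errors accumulate only linearly over at most $\mathrm{rank}(V_t/V_{t-1})$ steps. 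With that bound made explicit your argument is complete, and it supplies a self-contained justification of the step that the paper's proof leaves to the reader.
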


\begin{proof}
The dimension of the $\mathbb R$--vector space
${\rm NS}({\rm Fl}(E))_{\mathbb R}$ is $\nu+1$,
and the vector space is generated by $\{\widetilde{\omega}_i\}_{i=1}^\nu
\bigcup\Phi^* {\mathcal L}'$. We note that ${\mathcal L}'$ and all
$\widetilde{\omega}_i$ are nef.

Fix any point $x\, \in\, X$. For each $i\, \in\, [1\, , \nu]$,
define
$$
\widetilde{\omega}_{x,i}\, :=\, 
\widetilde{\omega}_i\vert_{\Phi^{-1}(x)}\, \in\, 
{\rm NS}(\Phi^{-1}(x))_{\mathbb R}\, .
$$
The dimension of the $\mathbb R$--vector space
${\rm NS}(\Phi^{-1}(x))_{\mathbb R}$ is $\nu$. It
is known that the nef cone of ${\rm NS}(\Phi^{-1}(x))_{\mathbb 
R}$ is generated by $\{\widetilde{\omega}_{x,i}\}_{i=1}^\nu$
(see \cite[p. 187, Theorem 1]{Br} for a general result).
In view of this, the theorem follows from
Proposition \ref{prop-e1} (respectively, Proposition 
\ref{prop-e2}) when the characteristic of $k$ is zero
(respectively, positive).
\end{proof}

\begin{remark}\label{rem3}
{\rm All the elements of the generating set of the nef cone in
${\rm NS}({\rm Fl}(E))_{\mathbb R}$ given in Theorem \ref{th-fl} lie
in ${\rm NS}({\rm Fl}(E))_{\mathbb Q}$.}
\end{remark}


\end{document}